\newcommand{\ZZ}{\mathbb Z}
\newcommand{\CC}{\mathbb C}
\newcommand{\NN}{\mathbb N}
\newcommand{\cpt}{\mathbb K}
\newcommand{\adj}[4]{#1\negmedspace: #2\rightleftarrows #3:\negmedspace #4}
\def\Pro{\textup{Pro}}
\def\coker{\textup{coker}}
\def\C{\textup{C}}
\def\ev{\textup{ev}}
\def\id{\mathrm{id}}
\def\Hom{\textup{Hom}}
\def\ker{\textup{ker}}
\def\op{\textup{op}}
\def\prot{\hat{\otimes}}
\def\Fun{\textup{Fun}}
\def\st{{\sf stab}}
\def\St{{\sf Stab}}
\def\NSH{\mathtt{NSH}}
\def\K{\textup{K}}
\def\SS{\mathbb{S}}
\def\S{\mathcal{S}}
\def\iNSp{\mathtt{NSp}}
\def\iNS{\mathtt{N}\mathcal{S_*}}
\def\hNS{\mathtt{hN}\mathcal{S_*}}
\def\hNSp{\mathtt{hNSp}}
\def\bu{\textup{bu}}
\def\cSf{\mathcal{S}^{\textup{fin}}_*}
\def\Sp{\mathtt{Sp}}
\def\hSp{\mathtt{hSp}}
\def\LC{\mathtt{LC}}
\def\LW{\mathtt{LC_W}}
\def\SHo{\mathtt{\Sigma Ho^{C^*}}}
\def\KK{\textup{KK}}
\def\E{\textup{E}}
\def\cC{\mathcal C}
\def\1{\bf{1}}
\def\SWf{\mathtt{SW^f}}
\def\Csep{\mathtt{SC^*}}
\def\cU{\mathcal U}
\def\iCsep{\text{$\mathtt{SC_\infty^*}$}}
\def\ilim{\varprojlim}
\def\holim{\textup{holim\,}}
\def\hocolim{\textup{hocolim\,}}
\def\dlim{\varinjlim}
\def\cT{\mathcal T}
\def\cD{\mathcal D}
\def\cI{\mathcal I}
\def\cA{\mathcal{A}}
\def\cB{\mathcal{B}}
\newcommand{\map}{\rightarrow}
\newcommand{\functor}{\rightarrow}
\def\h{\mathtt{h}}
\def\Exc{\mathtt{Exc}}
\def\Ind{\textup{Ind}}
\def\hosc{\mathtt{HoSC^*}}
\def\SH{\mathtt{SH}}
\newcommand{\beq}{\begin{eqnarray}}
\newcommand{\beqn}{\begin{eqnarray*}}
\newcommand{\eeq}{\end{eqnarray}}
\newcommand{\eeqn}{\end{eqnarray*}}
\theoremstyle{definition}
\newtheorem{thm}{Theorem}[section]
\newtheorem{lem}[thm]{Lemma}
\newtheorem{prop}[thm]{Proposition}
\newtheorem{cor}[thm]{Corollary}
\newtheorem{ex}[thm]{Example}
\newtheorem{defn}[thm]{Definition}
\newtheorem{rem}[thm]{Remark}
\newtheorem*{Rem}{Remark}
\newtheorem{que}[thm]{Question}
\begin{document}

\title{Noncommutative stable homotopy and stable infinity categories}
\author{Snigdhayan Mahanta}
\email{s.mahanta@uni-muenster.de}
\address{Mathematical Institute, University of Muenster, Einsteinstrasse 62, 48149 Muenster, Germany.}
\subjclass[2010]{46Lxx, 18E30, 18G30}
\keywords{Noncommutative stable homotopy, stable $\infty$-category, triangulated category, Brown representability, bivariant homology, $C^*$-algebra}
\thanks{This research was supported by the Deutsche Forschungsgemeinschaft (SFB 878), ERC through AdG 267079, and the Humboldt Professorship of M. Weiss.}
\maketitle

\begin{abstract}
The noncommutative stable homotopy category $\NSH$ is a triangulated category that is the universal receptacle for triangulated homology theories on separable $C^*$-algebras. We show that the triangulated category $\NSH$ is topological as defined by Schwede using the formalism of (stable) infinity categories. More precisely, we construct a stable presentable infinity category of noncommutative spectra and show that $\NSH^\op$ sits inside its homotopy category as a full triangulated subcategory, from which the above result can be deduced. We also introduce a presentable infinity category of noncommutative pointed spaces that subsumes $C^*$-algebras and define the noncommutative stable (co)homotopy groups of such noncommutative spaces generalizing earlier definitions for separable $C^*$-algebras. The triangulated homotopy category of noncommutative spectra admits (co)products and satisfies Brown representability. These properties enable us to analyse neatly the behaviour of the noncommutative stable (co)homotopy groups with respect to certain (co)limits. Along the way we obtain infinity categorical models for some well-known bivariant homology theories like $\KK$-theory, $\E$-theory, and connective $\E$-theory via suitable (co)localizations. The stable infinity category of noncommutative spectra can also be used to produce new examples of generalized (co)homology theories for noncommutative spaces.
\end{abstract}

\setcounter{tocdepth}{2}
\tableofcontents

\begin{center}
{\bf Introduction}
\end{center}

The most widely studied bivariant homology theory of separable $C^*$-algebras is Kasparov's $\KK$-theory \cite{KasKK1,KasKK2}. A variant of $\KK$-theory for separable $C^*$-algebras with better excision properties was developed in \cite{HigETh} by using categories of fractions. The same theory attained a different description via {\em asymptotic homomorphisms} through the work of Connes--Higson \cite{ConHig}, which eventually came to be known as bivariant $\E$-theory. In the construction of bivariant $\E$-theory one applies a stabilization by the compact operators in order to enforce $C^*$-stability. It was shown separately by Connes--Higson and D\u{a}d\u{a}rlat \cite{DadAsymHom} that one obtains an aperiodic bivariant homology theory on the category of separable $C^*$-algebras after infinite suspension if the stabilization by compact operators is left out. Furthermore, the associated univariant (co)homology theory recovers the stable (co)homotopy theory of a finite CW complex $X$ when it is applied to the commutative and unital $C^*$-algebra $\C(X)$. Therefore, this bivariant homology theory is called the {\em noncommutative stable homotopy theory}. Houghton-Larsen--Thomsen showed in \cite{HouTho} that this theory admits a universal characterization, which is useful for analysing its formal properties. Finally Thom showed in \cite{ThomThesis} that the category of separable $C^*$-algebras equipped with the bivariant stable homotopy groups admits a natural triangulated category structure and called it the {\em noncommutative stable homotopy category}. We denote this triangulated category by $\NSH$. It contains the Spanier--Whitehead category of finite spectra as a full triangulated subcategory. Strictly speaking, the Spanier--Whitehead category sits contravariantly inside $\NSH$; however, in view of Spanier--Whitehead duality we may ignore this issue. The bivariant $\E$-theory category appears as a Verdier quotient of the triangulated category $\NSH$. Therefore, noncommutative stable homotopy theory is a sharper invariant than bivariant $\K$-theory for nuclear separable $C^*$-algebras. However, the triangulated category $\NSH$ is plagued by certain shortcomings from the viewpoint of homotopy theory. Its deficiencies are similar to those of the finite stable homotopy category. One way to address this problem is to construct a stable model category and show that $\NSH$ is a full triangulated subcategory of its homotopy category. This was an open question in \cite{ThomThesis} and in the language of Schwede \cite{SchwedeTri} this problem can be stated as: Is $\NSH$ a {\em topological triangulated category}? This question is also important from the perspective of the global structure of $\NSH$ and questions of this nature can be traced back to \cite{RosNCT}. It was shown by Andersen--Grodal that one cannot construct a model structure on the category of $C^*$-algebras that models the standard homotopy category of $C^*$-algebras \cite{AndGro} (see also \cite{Uuye,JoaJoh}). It must be noted that the category of all $C^*$-algebras is closed under small limits and colimits. Hence there is a trivial model category structure on the category of all $C^*$-algebras; isomorphisms are defined to be weak equivalences and every $*$-homomorphism is set to be a fibration as well as a cofibration. Its homotopy category is clearly not the standard homotopy category of $C^*$-algebras.

We solve the above-mentioned problem using the modern technology of infinity categories. We construct a stable infinity category of noncommutative spectra that has several appealing features to serve as the {\em right framework} for stable homotopy theory in noncommutative topology. To this end we freely use of the work of Lurie \cite{LurToposBook,LurHigAlg} on (stable) infinity categories, which uses the quasicategory model of Joyal \cite{Joyal,Joyal2} based on an earlier seminal work by Boardman--Vogt \cite{BoaVog}. The formalism of (stable) infinity categories is one of the several possible frameworks for studying higher category theory. It offers a robust setup with an extensive selection of computational tools and structural results. In the setting of stable infinity categories a universal characterization of higher algebraic $\K$-theory has recently been obtained by Blumberg--Gepner--Tabuada \cite{BluGepTab}. It will play an important role in a follow up article \cite{MyNSHLoc}, where we are going to provide some further applications of the setup that we develop here. We briefly discuss some related constructions in the literature.

Similar questions have been addressed in the setting of Quillen model categories by Joachim--Johnson in \cite{JoaJoh} and by {\O}stv{\ae}r in \cite{Ost}. The homotopy category of the model category constructed by Joachim--Johnson is an {\em enlarged $\KK$-category}. It is plausible that this model category is related to a (co)localization of our stable infinity category of noncommutative spectra (see Remark \ref{EbuKK}). The stable homotopy category of {\O}stv{\ae}r is motivic in nature and differs from $\NSH$. Let us mention that one can also perform homotopy theory within the world of $C^*$-algebras in the setting of a {\em category of fibrant objects} following Brown \cite{RosNCT,SchTopMet2,Uuye}. There is also a model category of $C^*$-categories \cite{DellTab}, whose homotopy category is different from the ones that we consider. Staying at the level of triangulated (homotopy) categories we have at our disposal the suspension stable homotopy category $\SHo$ of all (possibly nonseparable) $C^*$-algebras constructed in \cite{CunMeyRos}.

Topological triangulated categories have good structural properties but for computational purposes it is useful to have an {\em algebraic model}. Unfortunately, $\NSH$ is not {\em algebraic} according to the definition of Keller \cite{KelDG}. Indeed, algebraicity passes to triangulated subcategories and $\NSH$ contains a nonalgebraic triangulated subcategory, viz., the opposite of the finite stable homotopy category. However, there are reasons to remain optimistic on the algebraization problem. Recently Bentmann showed in \cite{Bentmann} that after restricting one's attention to suitable triangulated subcategories of certain (co)localizations of $\NSH$, one might expect algebraic models since they have infinite $n$-order. The $n$-order of a triangulated category was defined by Schwede in \cite{SchAlgTri} and it is infinite for an algebraic triangulated category. Here is a glossary of our constructions for the benefit of the reader:

\begin{enumerate}
\item $\iCsep =$ infinity category of separable $C^*$-algebras.
\item $\iNS =$ infinity category of pointed noncommutative spaces.
\item $\Sp(\iCsep) =$ minimal stabilization of separable $C^*$-algebras.
\item $\iNSp =$ stable infinity category of noncommutative spectra.
\end{enumerate}

There is also a stabilization infinity functor $\Sigma^\infty_S: \iNS\functor\iNSp$ that can be regarded as the {\em suspension spectrum functor} in noncommutative topology. The opposite of the infinity category $\iCsep$ is our model for the infinity category of pointed compact metrizable noncommutative spaces. The infinity categories $\iCsep$ and $\iNS$ are canonically enriched over the infinity category of pointed spaces. The stable infinity categories $\Sp(\iCsep)$ and $\iNSp$ are both useful for the study of bivariant homology theories; however, $\iNSp$ has better formal properties by design. The infinity category $\iNS$ (resp. $\iNSp$) is presentable so that one can always extract a combinatorial simplicial model category (resp. combinatorial simplicial stable model category), whose underlying infinity category is $\iNS$ (resp. $\iNSp$). As a byproduct of this methodology we obtain stable infinity categorical models for $\E$-theory, connective $\E$-theory or $\bu$-theory, and $\KK$-theory denoted by $\mathtt{E}_\infty$, $\mathtt{bu}_\infty$, and $\mathtt{KK}_\infty$ respectively. The associated triangulated homotopy categories all turn out to be topological (see Remark \ref{EbuKK}). Recently Ivankov--Meyer also announced an independent construction of infinity categorical models for $\E$-theory and $\KK$-theory. Let us mention that some of our constructions described above can be generalized to a larger class of topological algebras like locally convex algebras (see Remark \ref{locallyConvex}). An outline of the article is as follows:

In section \ref{NSH} we briefly recall the construction of the triangulated noncommutative stable homotopy category $\NSH$ following \cite{ThomThesis}. We analyse some of its features and also construct its $p$-localization as a monoidal triangulated category for any prime $p$.  The triangulated category $\NSH$ is a (rough) counterpart of the triangulated category of finite spectra; there are also some deviations (see \cite{MyNGH}). Nevertheless, it seems reasonable to expect that it be contained fully faithfully in the homotopy category of every model for noncommutative stable homotopy theory and this is the guiding philosophy behind the constructions in this article.

In Section \ref{Infty} we construct infinity categorical models for both unstable (denoted by $\iNS$) and stable (denoted by $\iNSp$) homotopy categories of noncommutative pointed spaces. We also construct a canonical fully faithful exact functor from the triangulated category $\NSH$ to the homotopy category of the stable infinity category $\iNSp^\op$. Using this result we give an affirmative answer to the question whether $\NSH$ is topological (see Theorem \ref{TopTri}). As a consequence we obtain two different descriptions of the noncommutative stable homotopy category $\NSH$; one is a convenient setting for analysing the formal categorical properties, while the other comes in handy for explicit computations. The stable infinity category of noncommutative spectra $\iNSp$ has several attractive features like canonical enrichment over spectra and the ease of further localization, to mention only a few. It also has the advantage of satisfying Brown representability in complete generality (see Theorem \ref{BR}). It is well-known that very few (co)homology theories are (co)respresentable in $\NSH$. Using the stable infinity category $\iNSp$ one can construct new generalized (co)homology theories for $C^*$-algebras that will appear elsewhere. In subsection \ref{Ostvaer} we also exhibit a comparison functor from $\NSH$ to the noncommutative stable homotopy category of {\O}stv{\ae}r that should be studied further.

Finally in Section \ref{Htpy} we define the noncommutative stable (co)homotopy groups of all noncommutative pointed spaces using noncommutative spectra and study their behaviour under certain (co)limits (see Theorem \ref{hNSpCont}). Our definitions generalize the earlier ones for separable $C^*$-algebras (see \cite{ConBook,DadAsymHom}). We emphasize the simplicity of the arguments in this section, which is made possible by our construction of $\iNSp$. It was mentioned earlier that in \cite{CunMeyRos} the authors introduced a suspension stable homotopy category $\SHo$ for all (possibly nonseparable) $C^*$-algebras. However, the authors themselves stated that $\SHo$ may not be the right stabilization of all $C^*$-algebras. For instance, it is not clear whether the behaviour of (co)representable functors under (co)limits in $\SHo$ can be phrased as easily as in our construction (see Remarks \ref{CunMeyRos} and \ref{nonsep}). We conclude with some basic computations for finite group $C^*$-algebras. In a forthcoming article we shall compute the noncommutative stable cohomotopy groups of $ax+b$-semigroup $C^*$-algebras associated to number rings.

\vspace{2mm}
\noindent
{\bf Notation and conventions:} In this article an infinity category will always mean an $(\infty,1)$-category and we are going to write it as $\infty$-category following the literature. Unless otherwise stated, a $C^*$-algebra is assumed to be separable. A limit (resp. colimit) in the context of $\infty$-categories is assumed to be an $\infty$-limit (resp. $\infty$-colimit). Similarly, a localization in the context of $\infty$-categories will mean a (Bousfield) localization of $\infty$-categories. In order to keep the exposition concise we have refrained from discussing the rich theory or history of $\infty$-categories. Interested readers are encouraged to peruse \cite{BoaVog, LurToposBook, LurHigAlg, Joyal, Joyal2}, amongst others, for material pertinent to the quasicategory model that we have used here and to consult \cite{Bergner} for a comparison of the different formalisms.

\begin{Rem}
Since $\Csep^\op$ is the category of noncommutative pointed compact metrizable spaces, it is $\NSH^\op$ that deserves the title of noncommutative stable homotopy category. The triangulated category $\NSH$ should be regarded as the suspension stable homotopy category of separable $C^*$-algebras as in \cite{CunMeyRos}. Nevertheless, we stick to the established nomenclature \cite{ThomThesis}.
\end{Rem}

\noindent
{\bf Acknowledgements.} The author would like to thank I. Barnea, J. Cuntz, D. Enders, B. Jacelon, M. Joachim, M. Marcolli, T. Nikolaus, S. Schwede, K. Strung and M. Weiss for helpful discussions. The author is thankful to P. A. {\O}stv{\ae}r for bringing \cite{Ost} to our attention and explaining some of its contents. The author is indebted to D. Gepner for generously sharing his knowledge of $\infty$-categories and A. Thom for very helpful feedback. The author is extremely grateful to the anonymous referee for suggesting several improvements. The author gratefully acknowledges the support and hospitality of MFO Oberwolfach, ESI Vienna, and MPIM Bonn under various stages of development of this project.

\section{Noncommutative stable homotopy $\NSH$} \label{NSH}
In this section we recall some basic facts about the (noncommutative) stable homotopy category. Connes--Higson \cite{ConBook} and D\u{a}d\u{a}rlat \cite{DadAsymHom} independently showed that the asymptotic homotopy classes of asymptotic homomorphisms between separable $C^*$-algebras lead to a satisfactory notion of noncommutative (stable) homotopy theory after suspension stabilization. This construction was put in the context of triangulated categories by Thom in \cite{ThomThesis}.

In stable homotopy theory the Spanier--Whitehead category of finite spectra, denoted by $\SWf$, is a fundamental object of study. It is constructed by formally inverting the suspension functor in the homotopy category of finite pointed CW complexes. More precisely, its objects are pairs $(X,n)$, where $X$ is a finite pointed CW complex and $n\in\ZZ$. Let $S$ denote the reduced suspension functor. The morphisms in this category are defined as $\SWf((X,n),(X',n')):= {\dlim}_r [ S^{r+n}X,S^{r+n'}X']$, where the colimit is taken over the suspension maps. The category $\SWf$ is a triangulated category, where the distinguished triangles are those which are equivalent to mapping cone triangles.

One possible formulation of the Gel'fand--Na{\u{\i}}mark correspondence is that the category of pointed compact metrizable spaces with pointed continuous maps is contravariantly equivalent to the category of separable commutative $C^*$-algebras with $*$-homomorphisms via the functor $(X,x)\mapsto \C(X,x)$. Here $\C(X,x)$ denotes the $C^*$-algebra of continuous complex valued functions on $X$ vanishing at the basepoint $x\in X$. Let us denote by $\Csep$ the category of separable $C^*$-algebras and by $\hosc$ its homotopy category. Let $\hosc[\Sigma^{-1}]$ be the category, whose objects are pairs $(A,n)$, $A\in\Csep$ and $n\in\ZZ$, with morphisms defined as $$\hosc[\Sigma^{-1}]((A,n),(B,m)):={\dlim}_r [\Sigma^{r+n} A,\Sigma^{r+m} B].$$ Here $\Sigma(-)=\C_0((0,1),-)$ denotes the suspension functor in the category of $C^*$-algebras and $[-,-]$ denotes homotopy classes of $*$-homomorphisms. This is the most direct generalization of $\SWf$ to the noncommutative setting. It admits a canonical triangulated category structure similar to the Spanier--Whitehead construction described above (see, e.g., \cite{Ambrogio} for more details). There is an evident functor $\Csep\functor\hosc[\Sigma^{-1}]$, which sends $A\in\Csep$ to $(A,0)$ and any $*$-homomorphism to its suspension stable homotopy class. For any $*$-homomorphism $f:A\map B$ the pullback $\C(f)$ of the diagram $[A\overset{f}{\map}{B}\overset{\ev_0}{\leftarrow} B[0,1)]$ in $\Csep$ is called the {\em mapping cone of $f$}. Any surjective $*$-homomorphism $f:A\map B$ in $\Csep$ gives rise to a canonical $*$-homomorphism $\ker(f)\map\C(f)$. Localizing the triangulated category $\hosc[\Sigma^{-1}]$ along this class of morphisms produces a new triangulated category that is called the {\em noncommutative stable homotopy category} $\NSH$. There is an exact localization functor $\hosc[\Sigma^{-1}]\functor\NSH$, which gives rise to a canonical composite functor $\iota:\Csep\functor\hosc[\Sigma^{-1}]\functor\NSH$. 

The morphisms in the localized triangulated category $\NSH$ are in general described by some isomorphism classes of {\em roof diagrams}, which are quite cumbersome. There is an alternative description, where every morphism can be represented (up to asymptotic homotopy) by a $*$-homomorphism. It is known that \beq\label{PiDef}\NSH((A,n),(B,m))\cong {\dlim}_r [[\Sigma^{r+n} A,\Sigma^{r+m} B]],\eeq where $[[-,-]]$ denotes the asymptotic homotopy classes of asymptotic homomorphisms. Recall that $\cU A:=\C_b([0,\infty),A)/\C_0([0,\infty),A)$ is called the {\em asymptotic algebra} of $A$. An {\em asymptotic homomorphism} from $A$ to $B$ is simply a $*$-homomorphism $A\map\cU B$. Two asymptotic homomorphisms $\phi_1,\phi_2: A\map \cU B$ are said to be {\em asymptotically homotopic} if there is a $*$-homomorphism $H: A\map \cU (B[0,1])$ such that $\cU(\ev_0)\circ H = \phi_1$ and $\cU (\ev_1)\circ H = \phi_2$. 

\begin{rem} \label{AsymHom}
The asymptotic algebra of a separable $C^*$-algebra is almost never separable. We are not going to regard it as an object in $\NSH$; it merely plays a role in the definition of asymptotic homomorphisms. If two asymptotic homomorphisms are homotopic as $*$-homomorphisms, then they are also asymptotically homotopic; the converse usually does not hold. Thus there is always a canonical map $[A,B]\map [[A,B]]$ for all $A,B\in\Csep$.
\end{rem}

\noindent
In order to avoid notational clutter the objects of the form $(A,0)$ will henceforth be simply denoted by $A$. A diagram of the form $$\Sigma C \map A \map B \map C$$ in $\NSH$ is called a {\em distinguished triangle} if (up to suspension) it is equivalent to a mapping cone extension \cite{CunSka} $$\Sigma C' \map \C(f)\map B'\overset{f}{\map} C'.$$ We quote the following result from \cite{ThomThesis}.

\begin{thm}[Thom]
Equipped with the distinguished triangles as described above and the maximal $C^*$-tensor product $\prot$, $\NSH$ is a tensor triangulated category.
\end{thm}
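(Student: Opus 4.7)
The plan is to verify the theorem in two stages: first establishing the triangulated structure on $\NSH$, then showing that the maximal tensor product descends and is compatible.

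For the triangulated structure, I would first recall that $\hosc[\Sigma^{-1}]$ is triangulated in direct analogy with the Spanier--Whitehead category $\SWf$: the suspension functor $\Sigma$ is inverted by construction, and the distinguished triangles are those isomorphic (after suitable desuspension) to mapping cone sequences $\Sigma B \map \C(f) \map A \overset{f}{\map} B$. The axioms (TR1)--(TR3) follow from the standard Puppe-sequence machinery applied to mapping cones in $\Csep$, using the characterization of $\C(f)$ as a homotopy pullback and the fact that the natural rotation of a mapping cone extension becomes an isomorphism after one further suspension. The octahedral axiom (TR4) is verified by comparing the iterated mapping cones of $f$, $g$, and $g\circ f$ via their natural comparison $*$-homomorphisms, exactly as in the topological setting. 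The passage from $\hosc[\Sigma^{-1}]$ to $\NSH$ is then a Verdier localization at the class $\{\ker(f) \map \C(f) : f \text{ surjective}\}$; since $\Sigma$ commutes with both kernels and mapping cones up to natural isomorphism, this class is stable under suspension, so the quotient inherits a triangulated structure. The alternative description \eqref{PiDef} in terms of asymptotic homomorphisms, due to Houghton-Larsen--Thomsen \cite{HouTho}, then lets one represent morphisms concretely by $*$-homomorphisms into asymptotic algebras.

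For the tensor structure, the key inputs are that $\prot$ is exact on short exact sequences of separable $C^*$-algebras and that it commutes with both suspension and mapping cones up to natural isomorphism: $A \prot \Sigma B \cong \Sigma(A \prot B)$ and $A \prot \C(f) \cong \C(\id_A \prot f)$. Consequently, for each fixed $A$ the endofunctor $A \prot (-)$ sends mapping cone triangles to mapping cone triangles and sends the comparison morphisms $\ker(f) \map \C(f)$ to morphisms of the same form, so $\prot$ descends to $\NSH$ and each $A \prot (-)$ is exact. Associativity, symmetry, and unitality of $\prot$ on $\Csep$ pass automatically to $\NSH$ by functoriality of the localization.

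The main obstacle is the compatibility of $\prot$ with asymptotic homomorphisms in description \eqref{PiDef}: tensoring a $*$-homomorphism $\phi : A \map \cU B$ with $\id_C$ a priori lands in $C \prot \cU B$ rather than $\cU(C \prot B)$, so one needs a natural $*$-homomorphism $C \prot \cU B \map \cU(C \prot B)$ that is functorial in both variables and well-behaved with respect to asymptotic homotopies. Constructing and controlling this comparison map for the maximal tensor product, in the absence of nuclearity hypotheses, is the most delicate analytic point, and it is precisely this issue that necessitates the use of $\prot$ (rather than an arbitrary tensor product) as the monoidal structure. Once this comparison is in hand, all the axioms of a tensor triangulated category reduce to their counterparts in $\Csep$, yielding the desired result.
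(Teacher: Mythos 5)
The paper does not actually prove this statement: it is quoted verbatim from Thom's thesis (the text says ``We quote the following result from \cite{ThomThesis}''), so there is no in-paper argument to compare yours against. Judged on its own, your outline follows the route one would expect (and, in broad strokes, the route taken in \cite{ThomThesis} and \cite{CunSka}): triangulate $\hosc[\Sigma^{-1}]$ Spanier--Whitehead style via mapping cone extensions, pass to $\NSH$ by a Verdier quotient, and descend $\prot$ using exactness of the maximal tensor product. You also correctly isolate the genuinely delicate analytic point, namely the natural $*$-homomorphism $C\prot\cU B\map\cU(C\prot B)$; it is worth noting that this map exists precisely because $\prot$ is exact, so that $C\prot\cU B\cong (C\prot \C_b([0,\infty),B))/(C\prot \C_0([0,\infty),B))$ maps canonically to $\C_b([0,\infty),C\prot B)/\C_0([0,\infty),C\prot B)$ --- this is where the choice of the maximal tensor product is used, as you say.

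That said, as written this is a plan rather than a proof, and the steps you defer are exactly the load-bearing ones. First, the rotation and octahedral axioms for mapping cone triangles in $\hosc[\Sigma^{-1}]$ are asserted ``exactly as in the topological setting''; in the $C^*$-algebraic setting the comparison map from $\Sigma B$ to the iterated mapping cone is only a homotopy equivalence after stabilization, and verifying this (together with (TR4)) is a substantial portion of the actual work. Second, ``the quotient inherits a triangulated structure because the class is stable under suspension'' is too quick: one must pass to the thick subcategory generated by the cones of the morphisms $\ker(f)\map\C(f)$ and verify a calculus of fractions, and --- crucially --- the identification \eqref{PiDef} of the resulting morphism groups with asymptotic homotopy classes of asymptotic homomorphisms is itself a nontrivial theorem (Connes--Higson, D\u{a}d\u{a}rlat, Houghton-Larsen--Thomsen), not a formality; in Thom's development the long exact Puppe sequences are established directly for asymptotic homomorphisms, which also requires controlling the composition of asymptotic morphisms. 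Third, the compatibility of $C\prot(-)$ with asymptotic homotopies (so that $\prot$ is well defined on morphisms of $\NSH$ in the description \eqref{PiDef}) is flagged but not carried out. None of these points is wrong in direction, but until they are executed the theorem has not been proved; if you only intend to cite Thom, say so, and if you intend to reprove it, these are the places where the proof actually lives.
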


\begin{rem} 
The author denoted the noncommutative stable homotopy category by $\mathrm{S}$ in \cite{ThomThesis}. Since there is a profusion of the letter `$\mathrm{S}$' appearing in different contexts in this article, we have decided to denote it by $\NSH$. We hope that this descriptive choice of notation will avoid confusion in the literature.
\end{rem} 

\noindent
Performing localizations of this category one obtains interesting bivariant homology theories on the category of separable $C^*$-algebras, which is a viewpoint that was advocated in \cite{ThomThesis}.

\begin{ex} \label{bu}
We give only two examples below; some other interesting possibilities are easily conceivable.
\begin{enumerate}
\item By localizing all the corner embeddings $A\map A\prot\cpt$ one obtains the bivariant Connes--Higson $\E$-theory \cite{ConHig}. It follows that $\NSH(A,B)\cong\E_0(A,B)$ in the category of stable $C^*$-algebras; in fact, the stability of $B$ suffices.

\item By localizing all the corner embeddings $A\map M_2(A)$ one obtains a connective version of bivariant $\E$-theory that was introduced in \cite{ThomThesis}. It is a noncommutative generalization of bivariant $\bu$-theory of spaces.
\end{enumerate}

\end{ex}

\begin{rem}
The Spanier--Whitehead category of spectra (not necessarily finite) is not the right framework for stable homotopy theory. For instance, it does not contain arbitrary coproducts. However, there is a consensus that every model for stable homotopy category should contain the finite Spanier--Whitehead category $\SWf$ as a full triangulated subcategory of its homotopy category. Since we are roughly dealing with the noncommutative version of the finite Spanier--Whitehead category, the na{\"i}ve stabilization described above suffices.
\end{rem}

\subsection{Localization at a prime $p$}
Experience from algebraic topology teaches us that it might be worthwhile to study the noncommutative stable homotopy category by localizing it at various primes. There is an easy construction of a $p$-local version of $\NSH$. For any prime number $p$ one can define the $p$-local version of $\NSH$, denoted by $\NSH_p$, by tensoring the $\Hom$-groups $\NSH(-,-)$ with $\ZZ_{(p)}$. Here $\ZZ_{(p)}$ denotes the localization of $\ZZ$ at the prime ideal $p\ZZ$. Let us set $\SS=(\CC,0)$ in $\NSH$, which is also the unit object with respect to the tensor product $\prot$. Then there is an exact localization functor $\NSH\functor\NSH_p$ between triangulated categories. Furthermore, $\NSH_p$ admits the structure of a tensor triangulated category, such that if we denote by $\SS_p$ the image of $\SS$ under the localization functor, then $\SS_p$ is a unit object in $\NSH_p$ and the localization functor is monoidal (see, for instance, Theorem 3.6 of \cite{BalSpectra}). For the benefit of the reader we record it as a Lemma.

\begin{lem} \label{pLoc}
There is a monoidal exact $p$-localization functor $\NSH\functor\NSH_p$.
\end{lem}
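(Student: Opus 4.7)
The plan is to realize the $p$-localization as a central localization of the tensor triangulated category $\NSH$ in the sense of Balmer, and then identify the localized Hom-groups with $\NSH(-,-)\otimes_\ZZ\ZZ_{(p)}$. First I would observe that the graded endomorphism ring $R^*:=\bigoplus_{n\in\ZZ}\NSH(\SS,\Sigma^n\SS)$ of the unit $\SS=(\CC,0)$ is graded-commutative, so every element of $R^*$ is central in $\NSH$ in the sense that $f\otimes\id_X=\id_X\otimes f$ (up to the usual canonical isomorphisms) for every morphism $f$ with source or target $\SS$ and every object $X$. In particular, for each prime $\ell\neq p$, the morphism $\ell\cdot\id_\SS\colon\SS\map\SS$ is a central endomorphism of the unit.

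Let $S$ denote the multiplicative set generated by $\{\ell\cdot\id_\SS:\ell\neq p\text{ prime}\}$ inside $R^0=\NSH(\SS,\SS)$. The next step is to invoke Theorem 3.6 of \cite{BalSpectra} (central localization of tensor triangulated categories): the Verdier quotient of $\NSH$ by the thick $\otimes$-ideal generated by the cones of the morphisms in $S$ carries a canonical tensor triangulated structure such that the quotient functor $\NSH\functor\NSH[S^{-1}]$ is exact and symmetric monoidal, and the unit of $\NSH[S^{-1}]$ is the image of $\SS$. Calling this image $\SS_p$, the endomorphism ring of $\SS_p$ is by construction $R^0[S^{-1}]\cong\ZZ_{(p)}$.

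It remains to identify $\NSH[S^{-1}]$ with the category $\NSH_p$ defined by the formula $\NSH_p(A,B)=\NSH(A,B)\otimes_\ZZ\ZZ_{(p)}$. For this I would use the standard calculus of fractions argument for central multiplicative systems: since each $\ell\cdot\id_\SS$ acts on $\NSH(A,B)$ by multiplication by $\ell$ via the tensor action of $R^0$, inverting the set $S$ on Hom-groups amounts precisely to tensoring with $\ZZ_{(p)}$. This yields a canonical isomorphism $\NSH[S^{-1}](A,B)\cong\NSH(A,B)\otimes_\ZZ\ZZ_{(p)}$ of abelian groups, compatible with composition and the tensor product, establishing the desired monoidal exact functor $\NSH\functor\NSH_p$.

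The main potential obstacle is verifying the hypotheses of the cited theorem, namely that $\NSH$ has the features needed to perform central localization (in particular one needs the central elements to act coherently and the resulting quotient to be well-defined as a tensor triangulated category); however, since $\NSH$ is an honest tensor triangulated category with a unit $\SS$ and the inverting morphisms are endomorphisms of the unit, Balmer's framework applies directly, and the remaining work is the essentially formal identification of the localized Hom-groups with their tensor-product description.
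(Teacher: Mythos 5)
Your proposal is correct and follows essentially the same route as the paper, which defines $\NSH_p$ by tensoring the Hom-groups with $\ZZ_{(p)}$ and justifies the monoidal exact localization functor by appealing to Theorem 3.6 of \cite{BalSpectra} (central localization at the multiplicative set of primes $\ell\neq p$ in $\End(\SS)$). You have merely spelled out in more detail the identification of the central localization with the Hom-group tensoring, which the paper leaves implicit.
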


\noindent
Note that $\SWf$ is equivalent to its opposite category due to Spanier--Whitehead duality and it sits inside $\NSH$ via the construction $(X,x)\mapsto\C(X,x)$. Therefore, various results concerning $\SWf$ continue to hold inside $\NSH$. For any object $A\in\NSH$, the $n$-fold multiple of $\id_A\in\NSH(A,A)$ is denoted by $n\cdot A$ and the mapping cone of this morphism is denoted by $A/n$.

\begin{prop} \label{Prime}
For an odd prime $p$, one has $p\cdot A/p = 0$ for every $A\in\NSH$. 
\end{prop}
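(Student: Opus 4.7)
The plan is to leverage Thom's tensor triangulated structure on $(\NSH, \prot)$ to reduce the claim to the single case $A = \SS$, and then run the classical mod-$p$ Moore-spectrum argument directly inside $\NSH$. Since $\SS$ is the monoidal unit and $\prot$ is biexact in each variable, applying $A \prot (-)$ to the defining triangle $\SS \xrightarrow{p} \SS \xrightarrow{\iota} \SS/p \xrightarrow{\partial} \Sigma \SS$ produces a distinguished triangle canonically isomorphic to $A \xrightarrow{p} A \to A/p \to \Sigma A$. Hence $A/p \cong A \prot \SS/p$ and $p \cdot \id_{A/p} = \id_A \prot (p \cdot \id_{\SS/p})$, so it suffices to verify $p \cdot \id_{\SS/p} = 0$.

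To handle this case I would first apply the cohomological functor $\NSH(-, \SS/p)$ to the defining triangle. Since $\iota \circ p = 0$, one has $(p \cdot \id_{\SS/p}) \circ \iota = 0$, so exactness places $p \cdot \id_{\SS/p}$ in the image of $\partial^{*} : \NSH(\Sigma \SS, \SS/p) \to \NSH(\SS/p, \SS/p)$. Thus it is enough to show $\NSH(\Sigma \SS, \SS/p) = 0$ for odd $p$, and for this I would feed the same triangle into the homological functor $\NSH(\Sigma \SS, -)$, producing the exact fragment
$$\NSH(\Sigma \SS, \SS) \xrightarrow{p} \NSH(\Sigma \SS, \SS) \to \NSH(\Sigma \SS, \SS/p) \to \NSH(\Sigma \SS, \Sigma \SS) \xrightarrow{p} \NSH(\Sigma \SS, \Sigma \SS).$$

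The flanking groups are identified with classical stable (co)homotopy of spheres: via the D\u{a}d\u{a}rlat--Houghton-Larsen--Thomsen description of asymptotic homotopy classes together with Gel'fand--Na{\u{\i}}mark and Spanier--Whitehead duality one has $\NSH(\Sigma \SS, \Sigma \SS) \cong \NSH(\SS, \SS) \cong \pi_0^{st}(S^0) = \ZZ$, on which $p$ acts by an injection, and $\NSH(\Sigma \SS, \SS) \cong \pi_1^{st}(S^0) = \ZZ/2$, on which $p$ acts as an isomorphism because $p$ is odd. These two observations squeeze the middle term to zero. The only step requiring genuine care, rather than being a true obstacle, is the identification of these small $\Hom$-groups in $\NSH$ with stable homotopy groups of spheres, where one must track the contravariance built into Gel'fand--Na{\u{\i}}mark; once this bookkeeping is done the argument is formally identical to the classical statement that the mod-$p$ Moore spectrum has $p \cdot \id = 0$ in $\SWf$ for odd $p$, and it could alternatively be obtained by citing that classical fact and invoking the fully faithful (contravariant) embedding $C(-) : \SWf \hookrightarrow \NSH$.
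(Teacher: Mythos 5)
Your reduction to the unit object is exactly the paper's: tensor the defining triangle of $\SS/p$ with $A$, use biexactness of $\prot$ from Thom's tensor triangulated structure, and conclude $p\cdot\id_{A/p}=\id_A\prot(p\cdot\id_{\SS/p})$. Where you diverge is in how you dispose of the base case: the paper simply cites the classical fact $p\cdot\SS/p=0$ in $\SWf$ for odd $p$ (Schwede) and transports it along the fully faithful contravariant embedding $(X,x)\mapsto\C(X,x)$ --- precisely the ``alternative'' you mention in your last sentence --- whereas you re-run the two-long-exact-sequence Moore argument inside $\NSH$ itself. That is a legitimate and self-contained route, but it is also where your one real slip occurs, and it occurs at exactly the step you flagged as needing care. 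The translation functor of the triangulated category $\NSH$ is $\Omega=\Sigma^{-1}$, not the $C^*$-suspension $\Sigma$: the distinguished triangle is $\Sigma\SS\map\SS/p\map\SS\overset{p}{\map}\SS$, with the ``boundary'' map going \emph{into} $\SS/p$ from $\Sigma\SS$, not out of it. Consequently the triangle you wrote, $\SS\overset{p}{\map}\SS\map\SS/p\map\Sigma\SS$ with $\Sigma$ the $C^*$-suspension, is not distinguished in $\NSH$, and the identification $\NSH(\Sigma\SS,\SS)\cong\pi_1^{\textup{st}}(S^0)\cong\ZZ/2$ is false: contravariance gives $\NSH(\Sigma\SS,\SS)\cong{\dlim}_r[S^r,S^{r+1}]\cong\pi_{-1}^{\textup{st}}(S^0)=0$, while the group isomorphic to $\ZZ/2$ is $\NSH(\SS,\Sigma\SS)$.

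The fix is routine: show instead that $\NSH(\SS/p,\Sigma\SS)=0$. Since $p\cdot q=0$ for $q:\SS/p\map\SS$ the map in the triangle, exactness of $\NSH(\SS/p,-)$ puts $p\cdot\id_{\SS/p}$ in the image of $\NSH(\SS/p,\Sigma\SS)$, and applying $\NSH(-,\Sigma\SS)$ to the triangle sandwiches that group between the cokernel of $p$ on $\NSH(\SS,\Sigma\SS)\cong\ZZ/2$ (zero, as $p$ is odd) and the kernel of $p$ on $\NSH(\Sigma\SS,\Sigma\SS)\cong\ZZ$ (zero). So your argument survives once the arrows are pointed the right way; but as written the long exact sequences are taken against a triangle that does not exist in $\NSH$, so you should either carry out this relabeling explicitly or fall back on your alternative of citing the $\SWf$ statement through the embedding, which is what the paper does.
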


\begin{proof}
It is known that for an odd prime $p$, one has $p\cdot\SS_p = 0$ in $\SWf$ (see Proposition 5 of \cite{SchwedeTri}). In $\NSH$ there is an exact triangle $$\Sigma\SS \map \SS/p \map \SS\overset{p\cdot\SS}{\map} \SS.$$ Applying $A\prot -$ to the above triangle one obtains the distinguished triangle $$\Sigma A\map A/p\map A\overset{p\cdot A}{\map} A.$$ Now $p\cdot A/p \simeq p \cdot (\SS_p\prot A) \simeq (p\cdot \SS_p)\prot \id_A \simeq 0$. 
\end{proof}

\begin{rem}
The author is grateful to the referee for pointing out that the na{\"i}ve localization procedure described above may not be the best option from the viewpoint of Bousfield localization. In the next section we are going to embed $\NSH^\op$ inside a compactly generated triangulated category $\hNSp$ (see Theorem \ref{ExInc}), which facilitates such constructions. 
\end{rem}

\section{Noncommutative spaces and noncommutative spectra} \label{Infty}
We are interested in (co)localizations of the noncommutative stable homotopy category $\NSH$ since one can produce generalized (co)homology theories for separable $C^*$-algebras via this procedure. One would have access to a very elegant theory of (co)localization (enabling constructions like Lemma \ref{pLoc} in a generalized setup) if one could construct a combinatorial stable model category, whose homotopy category contained $\NSH$ as a full triangulated subcategory. Various features of $\NSH$ (including Proposition \ref{Prime} above) indicate that such a construction is viable demonstrating that $\NSH$ has a homotopy theoretic origin. We rephrase this as a natural question:

\begin{que}
Is $\NSH$ a topological triangulated category? 
\end{que} 

\noindent
Roughly speaking, a {\em topological triangulated category} is one which is equivalent to a full triangulated subcategory of the homotopy category of a stable model category (see \cite{SchTopTri} for a more accurate definition). Since $\NSH$ does not contain arbitrary coproducts, it cannot be equivalent to the homotopy category of a stable model category. One can try to rectify the situation by passing to the Ind-completion of $\NSH$. However, it is known that the Ind-completion of a triangulated category need not be triangulated \cite{ChrStr}. Moreover, several important constructions in homotopy theory rely on manoeuvres in the actual category of spectra, rather than its homotopy category. Thus we construct a stable $\infty$-categorical model of noncommutative spectra and show that $\NSH$ naturally sits (contravariantly) inside its homotopy category as a fully faithful triangulated subcategory. Our constructions rely on the elegant framework of (stable) $\infty$-categories developed by Joyal and Lurie \cite{Joyal,Joyal2,LurToposBook,LurHigAlg}. From the stable $\infty$-category of noncommutative spectra one also gets a combinatorial simplicial stable model category, whose homotopy category houses $\NSH^\op$ explaining its homotopy theoretic origin.

\subsection{An $\infty$-category of pointed compact metrizable noncommutative spaces}
The category of $C^*$-algebras is canonically enriched over that of pointed topological spaces. In order to remember the higher homotopy information it is important to keep track of the topology on the mapping sets. For any pair of $C^*$-algebras $A,B$, we equip the set of $*$-homomorphisms, denoted by $\Hom(A,B)$, with the topology of pointwise norm convergence. In the category of separable $C^*$-algebras $\Hom(A,B)=\Csep(A,B)$ is a metrizable topological space. Indeed, fix a sequence $\{a_n\}_{n\in\NN}$ in $A$, such that $\textup{lim}\, a_n = 0$ and the $\CC$-linear span of the set $\{a_n\}$ is dense in $A$. Then the metric $d(f_1,f_2) = \textup{sup}\{\| f_1(a_n)-f_2(a_n)\|_B\,|\, n\in\NN\}$ defines the desired topology on $\Csep(A,B)$. In fact, the category of separable $C^*$-algebras $\Csep$ is enriched over the category of pointed metrizable topological spaces (see Proposition 23 of \cite{Mey1}). We are going to refer to $\Csep$ as a {\em topological category}, when we endow it with the aforementioned enrichment but we do not introduce a new notation for it.

\begin{defn}
By taking the topological nerve of the topological category $\Csep$ (as in Section 1.1.5 of \cite{LurToposBook}) we obtain an $\infty$-category. We denote this $\infty$-category by $\iCsep$ and it is the {\em $\infty$-category of separable $C^*$-algebras}. Its opposite $\infty$-category is the {\em $\infty$-category of pointed compact metrizable noncommutatives spaces}.
\end{defn} 

\begin{rem}
The topological nerve of the topological category of CW complexes, where the mapping spaces are equipped with the compact-open topology, is called the {\em $\infty$-category of spaces} and denoted by $\S$. The $\infty$-category $\S$ plays a distinguished role since every $\infty$-category $\cC$ is canonically enriched over $\S$, i.e., for any $x,y\in\cC$ the mapping space $\cC(x,y)\in\S$. In particular, $\iCsep$ is enriched over $\S$.
\end{rem}

\begin{rem} \label{size}
Since the objects of $\Csep$ are separable $C^*$-algebras, it admits a small skeleton. For the sake of definiteness one could select those separable $C^*$-algebras that are concretely represented as $C^*$-subalgebras of $B(H)$ for a fixed separable Hilbert space $H$. One may replace $\iCsep^\op$ by this equivalent small $\infty$-category in order to circumvent potential set-theoretic issues in the sequel. 
\end{rem}

\subsection{A minimal stabilization of $\iCsep$}
The natural domain for studying stable phenomena in the setting of $\infty$-categories is that of {\em stable $\infty$-categories} \cite{LurHigAlg}. Rather tersely, it can be described as an $\infty$-category with a zero object $0$, such that every morphism admits a fiber and a cofiber, and the {\em fiber sequences} coincide with the {\em cofiber sequences}. Recall that a {\em fiber sequence} (resp. a {\em cofiber sequence}) is a pullback (resp. a pushout) square 
\beqn
\xymatrix{
X\ar[r]^f \ar[d] & Y\ar[d]^g\\
0\ar[r] & Z.
}
\eeqn and in a stable $\infty$-category the two notions are equivalent. An $\infty$-functor between two stable $\infty$-categories is called {\em exact} if it preserves all finite limits or, equivalently, if it preserves all finite colimits. Every pointed $\infty$-category $\cC$ admitting finite limits has a {\em loop functor} $\Omega_\cC:\cC\functor\cC$ defined as a pullback (up to a contractible space of choices)

\beqn
\xymatrix{
Y\simeq \Omega_\cC X\ar[r]\ar[d] & 0'\ar[d]\\
0\ar[r] & X,
}
\eeqn where $0,0'$ are zero objects. The dual construction, denoted by $\Sigma_\cC$,  produces the {\em suspension functor}, provided $\cC$ admits finite colimits. In a finitely bicomplete and pointed $\infty$-category $(\Sigma_\cC,\Omega_\cC):\cC\functor\cC$ form an adjoint pair. The aim of stabilization is to invert the functor $\Omega_\cC$.

\begin{rem}
Given any $C^*$-algebra one can construct its suspension staying within the category of $C^*$-algebras. However, in order to construct its homotopy adjoint {\em loop algebra} one must leave the world of $C^*$-algebras \cite{RosNCT}. One needs the full strength of pro $C^*$-algebras \cite{PhiHtpy}. We are going to address this issue in a slightly different manner (see Definition \ref{NCspaces} and Remark \ref{compGen} below).
\end{rem}

Let $\cA$ be a pointed $\infty$-category with finite colimits and let $\cB$ be an $\infty$-category with finite limits. Then an $\infty$-functor $F:\cA\functor\cB$ is called {\em excisive} if it sends a pushout square in $\cA$ to a pullback square in $\cB$ and it is called {\em reduced} if $F(\ast)$ is a final object in $\cB$, where $\ast\in\cA$ is a zero object. Let $\Exc_*(\cA,\cB)$ denote the full $\infty$-subcategory of the $\infty$-functor category $\mathtt{Fun}(\cA,\cB)$ spanned by the reduced excisive $\infty$-functors $\cA\functor\cB$. The $\infty$-category $\Exc_*(\cA,\cB)$ is stable (see Proposition 1.4.2.16. of \cite{LurHigAlg}). Let $\cSf$ denote the $\infty$-category of finite pointed spaces, which is 
an $\infty$-subcategory of the $\infty$-category of pointed spaces $\S_*$.

\begin{ex} \label{EnrichSp}
The stable $\infty$-category $\Exc_*(\cSf,\cB)$ is usually denoted by $\Sp(\cB)$ and its objects are called the {\em spectrum objects of $\cB$}. Setting $\cB=\S_*$ produces Lurie's model for the stable $\infty$-category of spectra, which is simply denoted by $\Sp$. Moreover, any stable $\infty$-category is canonically enriched over $\Sp$ \cite{LurHigAlg} (see also Example 7.4.14. of \cite{GepHau}).
\end{ex}

\noindent
We are going to stabilize $\iCsep$ using the procedure described above. To this end we show:

\begin{prop} \label{finlim}
The $\infty$-category $\iCsep$ possesses finite limits.
\end{prop}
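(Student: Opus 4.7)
The plan is to produce explicit models for a terminal object and for binary pullbacks in $\iCsep$; these suffice for all finite limits, since the dual of Corollary 4.4.2.5 of \cite{LurToposBook} implies that any $\infty$-category with a terminal object and pullbacks admits all finite limits.

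A terminal object is furnished by the zero $C^{*}$-algebra $0$: the mapping set $\Csep(A,0)$ is a singleton for every $A$, so its singular complex is contractible. (In fact $0$ is a zero object, since it is also initial in $\Csep$ for the same reason.)

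For the pullback of a cospan $A\overset{f}{\to}C\overset{g}{\leftarrow}B$, the naive $1$-categorical pullback $\{(a,b)\in A\oplus B : f(a)=g(b)\}$ need not give the $\infty$-categorical pullback, since its mapping space is merely the strict pullback of mapping spaces. Instead, I would form the mapping path algebra
$$A\times^{h}_{C}B \;:=\; \bigl\{(a,\gamma,b)\in A\oplus \bigl(C\otimes C([0,1])\bigr)\oplus B \,:\, \gamma(0)=f(a),\ \gamma(1)=g(b)\bigr\},$$
which is a closed $*$-subalgebra of a direct sum of separable $C^{*}$-algebras and hence itself separable. Here $C\otimes C([0,1])$ is the unique $C^{*}$-tensor product (as $C([0,1])$ is nuclear), canonically identified with the $C^{*}$-algebra $C([0,1],C)$ of continuous $C$-valued functions on $[0,1]$.

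Unwinding the universal property in $\Csep$, a $*$-homomorphism $D\to A\times^{h}_{C}B$ amounts to a triple $(\alpha,\tilde\eta,\beta)$ with $\alpha\in\Csep(D,A)$, $\beta\in\Csep(D,B)$, and $\tilde\eta$ a norm-continuous path $[0,1]\to\Csep(D,C)$ from $f\circ\alpha$ to $g\circ\beta$. This translation rests on the natural homeomorphism $\Csep(D,C([0,1],C))\cong C([0,1],\Csep(D,C))$, which follows directly from the definition of the topology of pointwise norm convergence on $\Csep(D,C)$. Granting it, $\Csep(D,A\times^{h}_{C}B)$ is precisely the standard topological mapping-path-space model for the homotopy pullback of $\Csep(D,A)\to\Csep(D,C)\leftarrow\Csep(D,B)$. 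Since the singular complex functor preserves homotopy pullbacks, this exhibits $A\times^{h}_{C}B$ as the $\infty$-categorical pullback in $\iCsep$. The main obstacle is really just the bookkeeping around $C\otimes C([0,1])$ and the continuous-paths identification; once that is in hand the proof assembles without further surprises.
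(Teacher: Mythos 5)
Your proposal is correct and follows essentially the same route as the paper: reduce via Lurie (terminal object plus homotopy pullbacks suffice) and then realize homotopy pullbacks of the point-norm mapping spaces inside $\Csep$ using function/path objects. The only difference is one of explicitness — where the paper delegates the construction of homotopy pullbacks to standard techniques and the existence of function objects $\C(|K|,k)\prot A$, you write out the mapping path algebra $A\times^{h}_{C}B$ and the identification $\Csep(D,\C([0,1],C))\cong \C([0,1],\Csep(D,C))$ (which the paper sources to Proposition 24 of Meyer) in full.
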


\begin{proof}
Since the $\infty$-category $\iCsep$ is constructed by taking the topological nerve of $\Csep$, it suffices to show that the topological category $\Csep$ admits finite homotopy limits (see Remark 1.2.13.6. of \cite{LurToposBook}). In fact, by dualizing Corollary 4.4.2.4. of \cite{LurToposBook} it suffices to check that $\Csep$ admits homotopy pullbacks and possesses a final object (which it evidently does). 

The category $\Csep$ actually admits all finite (ordinary) limits. Indeed, it is known that every small limit exists in the category of all (possibly nonseparable) $C^*$-algebras (see Proposition 19 of \cite{Mey1}). It can be proven by showing the existence of small products and equalizers of pairs of parallel morphisms. For an arbitrary set of $C^*$-algebras $\{A_i\}_{i\in I}$, one can easily construct a product $C^*$-algebra $\prod^{C^*}_{i\in I} A_i$ consisting of norm-bounded sequences of elements with the sup norm; the equalizer of any pair of parallel morphisms $f_1,f_2:A{\rightrightarrows}B$ is given by $$\ker(f_1-f_2)=\{a\in A\,|\, f_1(a)=f_2(a)\}\subseteq A.$$ The explicit description of the (ordinary) product reveals that the category of separable $C^*$-algebras admits all finite (ordinary) products. It is also clear that the equalizer of any pair of parallel morphisms in $\Csep$ exists within it, whence $\Csep$ actually admits all (ordinary) finite limits. Now finite homotopy pullbacks can be constructed using standard techniques (see Chapter 11 of \cite{BouKan}); note that for any finite pointed simplicial set $(K,k)$ and any $A\in\Csep$ the natural {\em function object} $\C((|K|,k),A)\cong\C(|K|,k)\prot A$ exists in $\Csep$.
\end{proof}

\begin{rem}
 The $\infty$-category $\iCsep$ is also pointed (there is a zero $C^*$-algebra).
\end{rem}

\begin{lem} \label{loop}
In the $\infty$-category $\iCsep$ one has $\Omega_{\iCsep} A \cong\Sigma A = \C_0((0,1),A)$. 
\end{lem}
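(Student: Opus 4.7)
The plan is to unwind what $\Omega_{\iCsep}A$ means: by construction, it is the $\infty$-pullback in $\iCsep$ of the diagram $0\map A\leftarrow 0$. Since $\iCsep$ is the topological nerve of the topological category $\Csep$, this $\infty$-limit is modeled by a homotopy limit in $\Csep$ (cf.\ Remark 1.2.13.6 of \cite{LurToposBook}, invoked already in the proof of Proposition \ref{finlim}). So the task reduces to identifying a concrete homotopy pullback of $0\map A\leftarrow 0$ inside $\Csep$ with $\Sigma A=\C_0((0,1),A)$.

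My second step is to exhibit a path-object model for $A$ inside $\Csep$. Set $PA:=\C_0((0,1],A)$ together with evaluation at the non-vanishing endpoint $\ev_1:PA\map A$. The object $PA$ is contractible in the topological category $\Csep$: the formula $H_t(f)(s):=f(ts)$ for $t\in[0,1]$ defines a continuous (in the enriched sense) nullhomotopy $PA\to PA$ from $\id_{PA}$ to the zero morphism, using that $f\in\C_0((0,1],A)$ vanishes at $0$. In particular the canonical inclusion $0\hookrightarrow PA$ is a (pointed) weak equivalence, and it factors $0\map A$ through $\ev_1$.

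My third step is to argue that $\ev_1:PA\map A$ behaves as a fibration for the purposes of the homotopy pullback, so that the strict pullback of $PA\overset{\ev_1}{\map}A\leftarrow 0$ computes the homotopy pullback of $0\map A\leftarrow 0$. The key input is that $\ev_1$ admits a continuous path-lifting with respect to the topology of pointwise norm convergence on $\Csep(-,PA)$ and $\Csep(-,A)$, obtained by the standard rescaling trick that lifts a path $[0,1]\map\Csep(T,A)$ (for $T\in\Csep$) to a path $[0,1]\map\Csep(T,PA)$. This is exactly the kind of telescoping/mapping-path construction coming from the Bousfield--Kan machinery (Chapter 11 of \cite{BouKan}) referenced in the proof of Proposition \ref{finlim}, which applies here because the function object $\C([0,1],-)\prot PA$ lives in $\Csep$. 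The strict pullback is then
\[
\{\,f\in \C_0((0,1],A)\,:\,f(1)=0\,\}\;=\;\C_0((0,1),A)\;=\;\Sigma A,
\]
which yields the desired equivalence $\Omega_{\iCsep}A\simeq\Sigma A$.

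The main obstacle I anticipate is the third step, namely making the statement that ``$\ev_1$ is a fibration'' rigorous in this topologically enriched (but not model-categorical) setting; all the other ingredients, i.e.\ the contractibility of $PA$ and the identification of the strict pullback, are direct calculations. This is where one must rely on the existence of the function objects $\C(|K|,k)\prot A$ together with the Bousfield--Kan recipe for homotopy pullbacks, exactly as was done when establishing finite homotopy pullbacks in Proposition \ref{finlim}.
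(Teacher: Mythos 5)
Your proposal is correct and follows essentially the same route as the paper: both replace one of the zero objects by the contractible cone algebra over $A$ (your $\C_0((0,1],A)$ is the paper's $\C_0([0,1),A)$ up to reparametrization), identify the strict pullback with $\Sigma A$, and justify that this computes the homotopy pullback via the fibration property of the evaluation map. The only difference is in how that last step is made precise: the paper applies $\Csep(D,-)$ and invokes Proposition 24 of \cite{Mey1} to recognize the standard path-space fibration $\textup{P}\Csep(D,A)\map\Csep(D,A)$ in pointed spaces, which is exactly the rigorous form of the ``$\ev_1$ is a fibration'' claim you flag as the delicate point.
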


\begin{proof}
For any $C^*$-algebra $A$, the homotopy pullback $\Omega_{\iCsep} A$ of $0\map A\leftarrow 0'$ in the topological category $\Csep$ is characterized by a weak equivalence $$\Csep(D,\Omega_{\iCsep} A)\simeq \holim [ \Csep(D,0)\map\Csep(D,A)\leftarrow\Csep(D,0')]$$ for every $D\in\Csep$. Here the weak equivalence is in the category of pointed topological spaces over which $\Csep$ is enriched. The suspension--cone short exact sequence of $C^*$-algebras $$0\map \Sigma A=\C_0((0,1),A) \map\C_0([0,1),A)\map A\map 0$$ can also be viewed as a pullback diagram in $\Csep$ 

\beqn
\xymatrix{
\Sigma A\ar[r] \ar[d] & \C_0([0,1),A)\ar[d]\\
0\ar[r] & A,
} 
\eeqn where the right vertical arrow is a Schochet fibration (or a cofibration according to the terminology of \cite{SchTopMet2}). Now observe that $\C_0([0,1),A)$ is homotopy equivalent to $0$ in $\Csep$, exhibiting $\Sigma A$ as a homotopy pullback in the topological category $\Csep$. Indeed, for every $D\in\Csep$ one has \beqn&&\holim [ \Csep(D,0)\map\Csep(D,A)\leftarrow\Csep(D,\C_0([0,1),A))] \\ &\simeq&\holim  [\ast \map \Csep(D,A)\leftarrow \textup{P}\Csep(D,A)]\\ &\simeq&\Omega\Csep(D,A)\eeqn in the category of pointed topological spaces. Here we have used Proposition 24 of \cite{Mey1} in order to identify $\Csep(D,\C_0([0,1),A))\cong \textup{P}\Csep(D,A)\simeq \ast$. Observe that the canonical map $\textup{P}\Csep(D,A)\map\Csep(D,A)$ is a fibration. Finally using Proposition 24 of \cite{Mey1} once again we conclude that $\Omega \Csep(D,A)\cong\Csep(D,\Sigma A)$. 
\end{proof}

\begin{lem} \label{mapcone}
For any morphism $f:A\map B$ in $\Csep$ the mapping cone construction 
\beqn
\xymatrix{
\C(f)\ar[r]\ar[d] & \C_0([0,1),B)\ar[d]\\
A \ar[r]^f & B
}
\eeqn
is a homotopy pullback in the topological category $\Csep$. 
\end{lem}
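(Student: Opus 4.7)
The plan is to mirror the strategy of Lemma \ref{loop}, reducing the statement to the path--space fibration at the level of hom--spaces via Proposition 24 of \cite{Mey1}.

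First, I would observe that the diagram in question is an ordinary pullback in $\Csep$ by the very definition of the mapping cone $\C(f)$ recalled earlier in the text. To upgrade this ordinary pullback to a homotopy pullback in the topological category $\Csep$, it suffices by Remark 1.2.13.6 of \cite{LurToposBook} to verify that for every $D\in\Csep$ the induced square of pointed topological spaces
\beqn
\xymatrix{
\Csep(D,\C(f))\ar[r]\ar[d] & \Csep(D,\C_0([0,1),B))\ar[d]^{(\ev_0)_*}\\
\Csep(D,A)\ar[r]^{f_*} & \Csep(D,B)
}
\eeqn
is a homotopy pullback. Because $\Csep(D,-)$ preserves all limits, this is the strict pullback of the displayed cospan in pointed topological spaces.

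Second, I would use Proposition 24 of \cite{Mey1}, exactly as in the proof of Lemma \ref{loop}, to identify
\beqn
\Csep(D,\C_0([0,1),B)) \;\cong\; \textup{P}\Csep(D,B),
\eeqn
the (based) path space of $\Csep(D,B)$, in such a way that the right--hand vertical arrow $(\ev_0)_*$ becomes the canonical evaluation--at--$0$ map $\textup{P}\Csep(D,B)\map\Csep(D,B)$. The latter is a Hurewicz fibration of pointed topological spaces by the usual path--space argument.

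Third, since one leg of the cospan at the level of hom--spaces is a fibration, the strict pullback coincides with the homotopy pullback. Consequently, $\Csep(D,\C(f))$ is the homotopy fibre of $f_*:\Csep(D,A)\map\Csep(D,B)$, which is precisely the defining property of a homotopy pullback in the topological category $\Csep$. Taking topological nerves transports the conclusion to $\iCsep$.

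The main conceptual point (not really an obstacle) is that no new technology is needed beyond Meyer's identification of path objects for $C^*$-algebras and the standard fact that path--space evaluation is a fibration; everything else is formal from the observation that hom-functors preserve ordinary limits and that a strict pullback along a fibration is a homotopy pullback.
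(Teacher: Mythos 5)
Your argument is essentially the paper's own proof: the paper simply states that one argues as in Lemma \ref{loop}, using that $\Csep(D,-)$ preserves pullbacks (citing Corollary 2.6 of \cite{Uuye}), which is exactly your reduction to the strict pullback of hom-spaces followed by the identification $\Csep(D,\C_0([0,1),B))\cong \textup{P}\Csep(D,B)$ via Proposition 24 of \cite{Mey1} and the observation that path-space evaluation is a fibration. Your write-up just makes explicit what the paper leaves implicit.
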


\begin{proof}
 The proof is similar to that of Lemma \ref{loop} using the fact that the functor $\Csep(D,-):\Csep\functor\mathtt{Top_*}$ preserves pullbacks for all $D\in\Csep$ (see Corollary 2.6 of \cite{Uuye}). 
\end{proof}

\begin{rem}
The distinguished triangles in $\NSH$ can also be written as $$\Omega_{\iCsep}C\map A\map B\map C,$$ which is more in keeping with the conventions in topology.
\end{rem}

It follows that the above stabilization scheme is applicable to $\iCsep$. Following standard practice we are going to denote the homotopy category of any $\infty$-category $\cA$ by $\mathtt{h}\cA$. It is known that the homotopy category of any stable $\infty$-category is triangulated (see Theorem 1.1.2.14 of \cite{LurHigAlg}). The distinguished triangles are induced by the (co)fiber sequences described above. This phenomenon is one of the delightful features of stable $\infty$-categories; the simple and intuitive definition of stable $\infty$-categories (expressed as a property) produces quite elegantly triangulated categories as their homotopy categories. For the benefit of the reader we record the essential features of $\Sp(\iCsep)$ below.

\begin{prop}
The stable $\infty$-category $\Sp(\iCsep)$ is canonically enriched over $\Sp$ and the homotopy category $\h\Sp(\iCsep)$ is triangulated.
\end{prop}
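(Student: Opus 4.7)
The plan is to assemble this proposition from general facts about stable $\infty$-categories that have already been cited in the excerpt; there is essentially no new content beyond verifying the hypotheses of Lurie's stabilization machinery for the specific $\infty$-category $\iCsep$. The strategy is to first establish that $\Sp(\iCsep)$ is itself a stable $\infty$-category, after which both the canonical enrichment over $\Sp$ and the triangulated structure on the homotopy category follow formally.

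First I would apply Proposition 1.4.2.16 of \cite{LurHigAlg} (as recalled in Example \ref{EnrichSp}) to conclude that $\Exc_*(\cSf,\iCsep) = \Sp(\iCsep)$ is a stable $\infty$-category. This requires two hypotheses on $\iCsep$: that it be pointed and that it admit finite limits. The pointed structure is present because the zero $C^*$-algebra is simultaneously initial and terminal in $\Csep$, and therefore a zero object in $\iCsep$. The existence of finite limits is exactly the content of Proposition \ref{finlim}. Thus the general machinery applies and $\Sp(\iCsep)$ is stable.

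Given stability, the remaining two claims are immediate corollaries of structural results already in the excerpt. The canonical $\Sp$-enrichment follows from the statement recalled in Example \ref{EnrichSp}: every stable $\infty$-category is canonically enriched over $\Sp$, via the considerations of \cite{LurHigAlg} and Example 7.4.14 of \cite{GepHau}. The assertion that $\h\Sp(\iCsep)$ is a triangulated category is then a direct application of Theorem 1.1.2.14 of \cite{LurHigAlg}, with the distinguished triangles coming from the (co)fiber sequences in the stable $\infty$-category $\Sp(\iCsep)$. The only substantive step — the existence of finite limits in $\iCsep$ — has already been dispatched in Proposition \ref{finlim}, so there is no genuine obstacle beyond correctly invoking Lurie's framework.
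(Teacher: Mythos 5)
Your proposal is correct and matches the paper's (implicit) argument exactly: the paper records this proposition as a summary of the preceding discussion, in which Proposition \ref{finlim} and the pointedness of $\iCsep$ are verified precisely so that Lurie's stabilization machinery (Proposition 1.4.2.16 of \cite{LurHigAlg}) applies, after which the $\Sp$-enrichment and the triangulation of the homotopy category follow from Example \ref{EnrichSp} and Theorem 1.1.2.14 of \cite{LurHigAlg} respectively. No further comment is needed.
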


This {\em loop stable} triangulated category is helpful to construct new generalized (co)homology theories on the category of separable $C^*$-algebras. However, we would like to model noncommutative stable homotopy and the above procedure produces a different stabilization. Moreover, this category does not admit all small colimits, which is a desirable feature. The second problem could have been rectified by enlarging $\Sp(\iCsep)$ directly by formally adjoining all infinite colimits. We are going to follow a different route as it also produces an $\infty$-categorical model for pointed noncommutative spaces that are not necessarily compact.

\subsection{An $\infty$-category of pointed noncommutative spaces}
For any regular cardinal $\kappa$ there is a formal procedure to adjoin $\kappa$-filtered colimits beginning with an $\infty$-category $\cA$. The construction is denoted by $\Ind_\kappa(\cA)$ and it is characterized by the property that it admits $\kappa$-filtered colimits and there is a Yoneda $\infty$-functor $j:\cA\functor\Ind_\kappa(\cA)$ that induces an equivalence of $\infty$-categories $$\Fun_\kappa(\Ind_\kappa(\cA),\cB)\functor\Fun(\cA,\cB),$$ for any $\infty$-category $\cB$, which admits $\kappa$-filtered colimits. Here $\Fun(-,-)$ [resp. $\Fun_k(-,-)$] denotes the $\infty$-category of $\infty$-functors [resp. the $\infty$-category of $\kappa$-continuous $\infty$-functors]. For further details see Section 5.3.5 of \cite{LurToposBook}. Let $\iCsep^\op$ denote the $\infty$-category, which is the opposite of $\iCsep$, and let $\iNS=\Ind_\omega(\iCsep^\op)$.

\begin{defn} \label{NCspaces}
We define the $\infty$-category $\iNS=\Ind_\omega(\iCsep^\op)\simeq \Pro_\omega(\iCsep)^\op$ to be the $\infty$-category of {\em pointed noncommutative spaces} that are not necessarily compact.  
\end{defn}

\noindent
One useful property that an $\infty$-category may possess is {\em presentability}. Intuitively, the presentability of an $\infty$-category ensures that it is a widely accommodating category (admits small colimits) and yet it is built on a small amount of data by simple procedures (accessible). The $\infty$-category $\iCsep^\op$ admits all finite colimits; the assertion follows from the dual of Proposition \ref{finlim}. Now Theorem 5.5.1.1 (4) of \cite{LurToposBook} shows that

\begin{lem}
The $\infty$-category $\iNS=\Ind_\omega(\iCsep^\op)$ is presentable. 
\end{lem}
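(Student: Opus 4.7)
The plan is to invoke directly the characterization of presentable $\infty$-categories given in Theorem 5.5.1.1(4) of \cite{LurToposBook}: an $\infty$-category $\cD$ is presentable if and only if there exists a regular cardinal $\kappa$ and an essentially small $\infty$-category $\cC$ admitting $\kappa$-small colimits such that $\cD \simeq \Ind_\kappa(\cC)$. Our target $\iNS = \Ind_\omega(\iCsep^\op)$ is already expressed in this form with $\kappa = \omega$ and $\cC = \iCsep^\op$, so the task reduces to verifying the two hypotheses on $\cC$.

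First, I would confirm that $\iCsep^\op$ is essentially small. The ordinary category $\Csep$ has a small skeleton, as recorded in Remark \ref{size}: one can restrict to separable $C^*$-subalgebras of $B(H)$ for a fixed separable Hilbert space $H$. Taking the topological nerve of this small topologically enriched subcategory and then passing to the opposite $\infty$-category yields a small model for $\iCsep^\op$, which is all that is needed for the $\Ind$-construction to behave well set-theoretically.

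Second, I would verify that $\iCsep^\op$ admits finite colimits, i.e., $\omega$-small colimits. This is the dual of Proposition \ref{finlim}, which asserts that $\iCsep$ admits finite limits; passing to opposites interchanges limits and colimits, and $\omega$-small colimits are precisely finite colimits. Combined with the pointedness of $\iCsep$ (and hence of $\iCsep^\op$), this provides all the structure demanded by the characterization.

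With both hypotheses in place, an application of Theorem 5.5.1.1(4) of \cite{LurToposBook} yields the desired conclusion. There is no genuine obstacle here; the statement is essentially a translation of the \emph{definition} of presentability as an $\Ind$-completion of a small $\infty$-category with suitable colimits, and the whole content of the lemma is the verification of smallness (handled by Remark \ref{size}) together with finite cocompleteness of $\iCsep^\op$ (handled by Proposition \ref{finlim}).
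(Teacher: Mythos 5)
Your proof is correct and follows exactly the paper's route: the paper likewise deduces that $\iCsep^\op$ admits finite colimits from the dual of Proposition \ref{finlim} and then applies Theorem 5.5.1.1(4) of \cite{LurToposBook}, with the smallness issue delegated to Remark \ref{size}. Your write-up merely makes the essential-smallness verification explicit, which the paper keeps implicit in the lemma's (one-line) proof.
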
 The canonical Yoneda embedding $j:\iCsep^\op\map \Ind_\omega(\iCsep^\op)=\iNS$ preserves all finite colimits that exist in $\iCsep^\op$ (see Proposition 5.3.5.14 of \cite{LurToposBook}). It follows from Lemma \ref{loop} that $\Sigma_{\iNS} j(A) \cong \Omega_{\iCsep}A$ for all objects $A\in\iCsep^\op$. The $\infty$-category $\iNS$ admits all finite limits (in fact, all small limits) and it is pointed by $j(0)$. In the sequel we are going to view separable $C^*$-algebras as objects of $\iNS$ via the Yoneda embedding $j$ but suppress it from the notation for brevity. Applying the above stabilization mechanism we obtain a stable $\infty$-category $\Sp(\iNS)$. However, $\Sp(\iNS)$ is not our end goal; it is merely a precursor. 

It follows from Proposition 1.4.4.4. of \cite{LurHigAlg} that $\Sp(\iNS)$ is a presentable stable $\infty$-category equipped with a canonical stabilization $\infty$-functor $\Sigma^\infty:\iNS\functor\Sp(\iNS)$. The composition of $\Sigma^\infty$ with the Yoneda embedding $j:\iCsep^\op\functor \iNS$ gives rise to an $\infty$-functor $\St:=\Sigma^\infty\circ j:\iCsep^\op\functor\Sp(\iNS)$. The $\infty$-functor $\St$ should be regarded as the suspension stabilization of a separable $C^*$-algebra. There is also an opposite functor $\Pi:=\St^\op:\h\iCsep\functor\hSp(\iNS)^\op$ at the level of homotopy categories. Note that $\h\iCsep\simeq\hosc$ is the homotopy category of separable $C^*$-algebras. Due to the symmetry in the definition of a stable $\infty$-category, the opposite of a stable $\infty$-category is also stable. Consequently the $\infty$-category $\Sp(\iNS)^\op$ is stable and its homotopy category $\h\Sp(\iNS)^\op$ is canonically triangulated.

\begin{rem} \label{compGen}
The image of $j:\iCsep^\op\map\iNS = \Ind_\omega(\iCsep^\op)$ provides a set of {\em compact objects} in $\iNS$, which generates $\iNS$ under filtered colimits (see Proposition 5.3.5.5 and Proposition 5.3.5.11 of \cite{LurToposBook}). In fact, by construction $\iNS$ is {\em compactly generated} (see Section 5.5.7 of \cite{LurToposBook}). Due to the presentability of $\iNS$ there is a combinatorial simplicial model category, whose underlying $\infty$-category is equivalent to $\iNS$ (see Proposition A.3.7.6 of \cite{LurToposBook}).
\end{rem}

\begin{rem} \label{Susp}
Using Proposition 1.4.2.24. of \cite{LurHigAlg} we conclude that $\Sp(\iNS)$ is equivalent to the inverse limit of the tower of $\infty$-categories $$\cdots \map \iNS \overset{\Omega_{\iNS}}{\map}\iNS \overset{\Omega_{\iNS}}{\map}\iNS$$ in the $\infty$-category of $\infty$-categories. Hence we recover the familiar description of $\Omega$-spectrum like objects from which one may conclude
\beqn
\hSp(\iNS)^\op(\Pi(A),\Pi(B))\cong\hSp(\iNS)(\St(B),\St(A))&\cong&{\dlim}_r \hNS(\Sigma_{\iNS}^r B,\Sigma_{\iNS}^r A) \\
&\cong&{\dlim}_r \h\iCsep(\Omega_{\iCsep}^r A,\Omega_{\iCsep}^r B)\\
&\cong& {\dlim}_r \hosc(\Sigma^r A,\Sigma^r B)\\
&=& {\dlim}_r [\Sigma^r A,\Sigma^r B]\eeqn for all separable $C^*$-algebras $A,B$.
\end{rem}

\begin{prop} \label{Ess}
The functor $\Pi:\hosc\simeq\h\iCsep\functor\hSp(\iNS)^\op$ induces a fully faithful exact functor $\Pi:\hosc[\Sigma^{-1}]\functor\hSp(\iNS)^\op$ between triangulated categories.
\end{prop}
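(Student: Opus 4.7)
The plan is to verify in turn that $\Pi$ factors through the localization $\hosc[\Sigma^{-1}]$, that the resulting extension is exact, and that it is fully faithful.

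First I would check that $\Pi$ commutes with suspension and thus descends to $\hosc[\Sigma^{-1}]$. By Lemma \ref{loop}, the $C^*$-algebraic suspension $\Sigma = \C_0((0,1),-)$ agrees with the $\infty$-categorical loop functor $\Omega_{\iCsep}$. Now the Yoneda embedding $j:\iCsep^\op\map\iNS = \Ind_\omega(\iCsep^\op)$ preserves finite colimits (Proposition 5.3.5.14 of \cite{LurToposBook}), and $\Sigma^\infty:\iNS\map\Sp(\iNS)$ is a left adjoint, so $\St = \Sigma^\infty\circ j$ preserves finite colimits. Consequently $\St(\Sigma A)\simeq\Sigma_{\Sp(\iNS)}\St(A)$, which in the opposite category gives a natural equivalence $\Pi\circ\Sigma \simeq \Sigma_{\hSp(\iNS)^\op}\circ\Pi$. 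Since the target is stable, this suspension is already an equivalence, so the universal property of formal inversion of $\Sigma$ yields a canonical extension $\Pi:\hosc[\Sigma^{-1}]\map\hSp(\iNS)^\op$ with $\Pi(A,n)\simeq\Sigma_{\hSp(\iNS)^\op}^n\Pi(A)$.

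Next I would establish fully faithfulness. Remark \ref{Susp} already gives
\[ \hSp(\iNS)^\op(\Pi(A),\Pi(B))\;\simeq\;{\dlim}_r\,[\Sigma^r A,\Sigma^r B] \;=\;\hosc[\Sigma^{-1}]((A,0),(B,0)), \]
and tracing through the construction in Remark \ref{Susp} the bijection is induced precisely by $\Pi$. For general $(A,n),(B,m)$ the claim reduces to the case $n=m=0$ by using the compatibility of $\Pi$ with the shift on both sides and the fact that $\Sigma$ acts invertibly everywhere in sight.

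Finally, exactness requires that $\Pi$ carries distinguished triangles to distinguished triangles. Commutation with the shift was already obtained above. Every distinguished triangle in $\hosc[\Sigma^{-1}]$ is, up to suspension, equivalent to a mapping cone triangle $\Sigma B\map\C(f)\map A\overset{f}{\map} B$. By Lemma \ref{mapcone}, the object $\C(f)$ arises as a homotopy pullback in $\Csep$. Since $\St$ preserves finite colimits in $\iCsep^\op$, $\Pi$ preserves finite limits in $\iCsep$; hence it sends this mapping cone pullback to a pullback square in $\Sp(\iNS)^\op$. Stability of $\Sp(\iNS)^\op$ identifies such pullbacks with fiber/cofiber sequences, which by Theorem 1.1.2.14 of \cite{LurHigAlg} yield distinguished triangles in the homotopy category, as required.

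The main obstacle I anticipate is not any single deep step but rather the careful bookkeeping of opposites: every statement about $\St$ preserving colimits must be translated into one about $\Pi$ preserving limits, and the suspension/loop functors on $\iCsep$ versus $\Sp(\iNS)$ versus their opposites must be kept straight. Once the opposites are handled cleanly, the three properties above follow directly from Lemmas \ref{loop} and \ref{mapcone}, Remark \ref{Susp}, and the basic functorial properties of the Yoneda embedding into the Ind-completion and of $\Sigma^\infty$.
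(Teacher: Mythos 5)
Your proposal follows essentially the same route as the paper: descend $\Pi$ along the formal inversion of $\Sigma$ using Lemma \ref{loop} and the colimit-preservation of $j$ and $\Sigma^\infty$, deduce exactness from Lemma \ref{mapcone} together with stability of $\Sp(\iNS)^\op$, and read off full faithfulness from the hom-set computation in Remark \ref{Susp}. The only wobble is labelling the induced functor on the target as $\Sigma_{\hSp(\iNS)^\op}$ rather than $\Omega_{\Sp(\iNS)^\op}$ (i.e.\ $\Sigma_{\Sp(\iNS)}$ read in the opposite category), but since that functor is invertible either way this does not affect the descent or any subsequent step.
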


\begin{proof}
Since the opposite of a stable $\infty$-category is also stable, the homotopy category $\hNSp^\op$ is triangulated. The functor $\Pi:\hosc\simeq\h\iCsep\functor\hSp(\iNS)^\op$ commutes with the $C^*$-suspension functor and hence descends to a unique functor $\Pi:\hosc[\Sigma^{-1}]\functor\hSp(\iNS)^\op$. By Lemma \ref{mapcone} every mapping cone diagram in $\Csep$ gives rise to a cofiber sequence in $\iCsep^\op$. Since $j$ preserves finite colimits, each such cofiber sequence becomes a cofiber sequence in $\iNS$ via $j$. The functor $\Sigma^\infty:\iNS\functor\Sp(\iNS)$ also preserves cofiber sequences. In $\Sp(\iNS)^\op$ this becomes a fiber sequence, which is also a cofiber sequence due to the stability of $\Sp(\iNS)^\op$. We conclude that the functor $\Pi:\hosc[\Sigma^{-1}]\functor\hSp(\iNS)^\op$ is exact. The functor $\Pi$ is determined on objects due to the commutativity constraint and compatibility with the (de)suspension functor. Recall from Section \ref{NSH} that $\hosc[\Sigma^{-1}]((A,n),(B,m))= {\dlim}_r [\Sigma^{r+n} A,\Sigma^{r+m} B]$ and the above Remark shows that $\hSp(\iNS)^\op(\Pi(A),\Pi(B))\cong{\dlim}_r [\Sigma^r A,\Sigma^r B]$. Since the suspension functor in $\hSp(\iNS)^\op$ is invertible it can be seen that $\hSp(\iNS)^\op(\Pi((A,n)),\Pi((B,m)))\cong{\dlim}_r [\Sigma^{r+n} A,\Sigma^{r+m} B]$. For every $(A,n),(B,m)\in\hosc[\Sigma^{-1}]$ there is a candidate for the functor $\Pi:\hosc\map\hSp(\iNS)^\op$ that induces a bijection between the morphism-groups $$\hosc[\Sigma^{-1}]((A,n),(B,m))\overset{\sim}{\map}\hSp(\iNS)^\op(\Pi((A,n)),\Pi((B,m)))$$ and satisfies the commutativity constraint. Due to the uniqueness of $\Pi$ it must be this one, which is manifestly a fully faithful functor. 
\end{proof}

\subsection{The stable $\infty$-category of noncommutative spectra $\iNSp$} \label{NSpLoc}
The stable $\infty$-category of noncommutative spectra is obtained as a (Bousfield) localization of $\Sp(\iNS)$. The goal of this localization $L:\Sp(\iNS)\functor S^{-1}\Sp(\iNS)$ is to achieve the following: all short exact sequences of separable $C^*$-algebras are made to behave like cofiber sequences in $\iCsep^\op$. 

For any $*$-homomorphism $f:B\map C$ in $\Csep$ there is a canonical map $\theta(f):\ker(f)\map\C(f)$ in $\iCsep$, which can be viewed as an element in $\iCsep^\op(\C(f),\ker(f))$. Now $\St(\theta(f))$ is a morphism in $\Sp(\iNS)$. We construct a {\em strongly saturated collection} of morphisms $S$ (see Definition 5.5.4.5 of \cite{LurToposBook}) in $\Sp(\iNS)$, which is of small generation starting from the small set $$S_0=\{\St(\theta(f))\,|\, \text{$f$ surjective $*$-homomorphism in $\Csep$}\}$$ that is compatible with the triangulation as follows: Let $\cA$ denote the stable $\infty$-subcategory of $\Sp(\iNS)$ generated by the set $\{\textup{cone}(g)\,|\, g\in S_0\}$. Then $\Ind_\omega(\cA)$ is a stable presentable $\infty$-subcategory of $\Sp(\iNS)$ (see Proposition 1.1.3.6 of \cite{LurHigAlg}). Let $S$ denote the class of maps in $\Sp(\iNS)$, whose cones lie in the essential image of $\Ind_\omega(\cA)$. We deduce from Proposition 5.6 of \cite{BluGepTab} that $S$ is a strongly saturated collection of small generation. Using the machinery of Section 5.5.4 of \cite{LurToposBook} we can construct an accessible localization $\infty$-functor $L:\Sp(\iNS)\functor S^{-1}\Sp(\iNS)$. It also follows from Proposition 5.6 of \cite{BluGepTab} that $S^{-1}\Sp(\iNS)\simeq \Sp(\iNS)/\Ind_\omega(\cA)$ as stable $\infty$-categories. Being born out of an accessible localization, the $\infty$-category $S^{-1}\Sp(\iNS)$ is presentable (see Remark 5.5.1.6 of \cite{LurToposBook}) and the localized $\infty$-category can be viewed as the full $\infty$-subcategory of $\Sp(\iNS)$ spanned by the {\em $S$-local} objects. Here an object $X\in\Sp(\iNS)$ is called {\em $S$-local} if, for each morphism $f:Y'\map Y$ in $S$, the composition with $f$ induces a homotopy equivalence $\Sp(\iNS)(Y,X)\overset{\sim}{\map}\Sp(\iNS)(Y',X)$. The localization $\infty$-functor $L$ is the left adjoint to the inclusion of the full $\infty$-subcategory of $S$-local objects inside $\Sp(\iNS)$. There is a composite right adjoint functor $S^{-1}\Sp(\iNS)\hookrightarrow\Sp(\iNS)\overset{\Omega^\infty_*}{\map} \iNS$, which plays the role of underlying infinite loop (noncommutative) space. Recall that an $\infty$-functor between stable $\infty$-categories is called {\em exact} if it commutes with finite limits (or, equivalently, if it commutes with finite colimits). We summarize the above discussion in the following:

\begin{lem} \label{pres}
The localization $L:\Sp(\iNS)\functor S^{-1}\Sp(\iNS)$ is an exact $\infty$-functor between stable and presentable $\infty$-categories.
\end{lem}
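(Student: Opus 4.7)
The plan is to assemble the three assertions of the lemma directly from results already invoked in the construction of $L$, with the exactness of $L$ being the only claim that requires independent justification. Nothing subtle is going on; the work consists mainly of harvesting the correct citations and combining them.

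First I would record that $\Sp(\iNS)$ is stable and presentable. Stability is built into the definition $\Sp(\iNS)=\Exc_*(\cSf,\iNS)$ (cf.\ Example \ref{EnrichSp} and Proposition 1.4.2.16 of \cite{LurHigAlg}). Presentability then follows from Proposition 1.4.4.4 of \cite{LurHigAlg} because $\iNS$ itself was shown to be presentable in the preceding lemma. Next I would argue the analogous properties for $S^{-1}\Sp(\iNS)$. Because the set $S_0$ is small and the strongly saturated class $S$ it generates is of small generation, Proposition 5.5.4.15 of \cite{LurToposBook} provides the accessible localization $L:\Sp(\iNS)\functor S^{-1}\Sp(\iNS)$ and Remark 5.5.1.6 of \cite{LurToposBook} ensures that the target remains presentable. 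Stability of $S^{-1}\Sp(\iNS)$ is precisely the content of the identification $S^{-1}\Sp(\iNS)\simeq \Sp(\iNS)/\Ind_\omega(\cA)$ given by Proposition 5.6 of \cite{BluGepTab}, since a Verdier quotient of a stable presentable $\infty$-category by a stable presentable $\infty$-subcategory is again stable and presentable.

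For exactness, I would exploit the standard adjunction picture. By construction, $L$ is the left adjoint to the fully faithful inclusion $S^{-1}\Sp(\iNS)\hookrightarrow\Sp(\iNS)$ of the $S$-local objects. As a left adjoint between presentable $\infty$-categories, $L$ preserves all small colimits, and in particular all finite colimits. Since $\Sp(\iNS)$ and $S^{-1}\Sp(\iNS)$ are both stable, Proposition 1.1.4.1 of \cite{LurHigAlg} tells us that an $\infty$-functor between stable $\infty$-categories is exact as soon as it preserves finite colimits (equivalently, finite limits). Hence $L$ is exact.

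No step in this argument is a genuine obstacle. If anything required care, it would be checking the hypotheses of Proposition 5.6 of \cite{BluGepTab}: one needs $\cA$ to be a stable $\infty$-subcategory of $\Sp(\iNS)$ generated under colimits by a small set, which is guaranteed by how $\cA$ was produced from the finite set $\{\mathrm{cone}(g)\mid g\in S_0\}$ and the passage to $\Ind_\omega(\cA)$. Once this identification is in place, the remaining verifications are purely formal consequences of the adjoint functor theorem and the property-based definition of stability.
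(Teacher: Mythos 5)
Your proposal is correct and follows essentially the same route as the paper: the lemma there is stated as a summary of the preceding construction, which rests on exactly the citations you assemble (Proposition 1.4.4.4 and Proposition 1.4.2.16 of \cite{LurHigAlg}, Remark 5.5.1.6 of \cite{LurToposBook}, and Proposition 5.6 of \cite{BluGepTab}), with exactness following from $L$ being a colimit-preserving left adjoint between stable $\infty$-categories. One minor slip: the set $\{\textup{cone}(g)\mid g\in S_0\}$ is small (via the small skeleton of $\Csep$) but not finite; smallness is all that Proposition 5.6 of \cite{BluGepTab} requires, so the argument is unaffected.
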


\noindent  
Motivated by Lurie's constructions and Thom's results we propose the following:

\begin{defn} \label{NSpectra}
We define the {\em stable $\infty$-category of noncommutative spectra} to be $$\iNSp:= S^{-1}\Sp(\iNS)$$ and the {\em triangulated homotopy category of noncommutative spectra} to be $$\hNSp:=\h S^{-1}\Sp(\iNS).$$ The composite $\infty$-functor $\Sigma^\infty_S :=L\circ\Sigma^\infty:\iNS\functor \iNSp =S^{-1}\Sp(\iNS)$ between presentable $\infty$-categories is the noncommutative analogue of the passage from pointed spaces to spectra.
\end{defn}

\begin{rem}
Being stable the $\infty$-category $\iNSp$ is enriched over $\Sp$ (see Example \ref{EnrichSp}). Besides the homotopy category of noncommutative spectra $\hNSp$ is canonically triangulated.
\end{rem}

\begin{rem} \label{Pi}
The construction above furnishes a canonical $\infty$-functor $\st:= L\circ\St :\iCsep^\op\functor\iNSp$. There is also an opposite functor $\pi:=\st^\op:\h\iCsep\functor\hNSp^\op$ at the level of homotopy categories. Thus the opposite of the triangulated homotopy category of noncommutative spectra bears a direct relationship with the homotopy category of separable $C^*$-algebras. This means that bivariant homology theories for separable $C^*$-algebras can be constructed via (co)localizations of $\iNSp^\op$.
\end{rem}

\begin{lem} \label{Brown}
The stable $\infty$-categories $\Sp(\iNS)$ and $\iNSp$ are compactly generated. 
\end{lem}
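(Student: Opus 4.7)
The plan is to prove compact generation for the two $\infty$-categories in turn, building on the compact generation of $\iNS$ already recorded in Remark \ref{compGen}.

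First I would argue that $\Sp(\iNS)$ is compactly generated. Since $\iNS=\Ind_\omega(\iCsep^\op)$ is a presentable pointed $\infty$-category that is compactly generated by the image of the Yoneda embedding $j:\iCsep^\op\functor\iNS$, the general stabilization machinery applies. The key point is that the right adjoint $\Omega^\infty_*:\Sp(\iNS)\functor\iNS$ of $\Sigma^\infty$ preserves filtered colimits: this follows from the tower description $\Sp(\iNS)\simeq\lim(\cdots\functor\iNS\overset{\Omega_{\iNS}}{\functor}\iNS)$ noted in Remark \ref{Susp}, together with the fact that filtered colimits commute with the finite limits defining $\Omega_{\iNS}$ in the compactly generated $\iNS$. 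Consequently $\Sigma^\infty$ preserves compact objects, and the set $\{\Sigma^\infty j(A):A\in\iCsep\}$ provides a small family of compact generators for $\Sp(\iNS)$, since by Proposition 1.4.4.4 of \cite{LurHigAlg} the essential image of $\Sigma^\infty\circ j$ generates $\Sp(\iNS)$ under small colimits and desuspensions.

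Second, for $\iNSp=S^{-1}\Sp(\iNS)$, I would exploit that the class $S$ is generated by maps between compact objects of $\Sp(\iNS)$. Each generating morphism $\St(\theta(f))\in S_0$ has source and target of the form $\St(D)=\Sigma^\infty j(D)$ for a separable $C^*$-algebra $D$, which by the first paragraph is compact in $\Sp(\iNS)$. Stability then guarantees that the cones of these maps, and hence the full generating set of $\cA$, consist of compact objects. The identification $\iNSp\simeq\Sp(\iNS)/\Ind_\omega(\cA)$ recorded in Section \ref{NSpLoc} therefore presents $\iNSp$ as a Verdier quotient of a compactly generated stable presentable $\infty$-category by a stable subcategory generated by compact objects. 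A standard result on such quotients, along the lines of Proposition 5.6 of \cite{BluGepTab}, then yields that $\iNSp$ is compactly generated, with compact generators supplied by the images under $L$ of the compact generators of $\Sp(\iNS)$.

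The main technical hurdle is the preservation of filtered colimits by $\Omega^\infty_*$ in the first step, since this is what unlocks compact generation of the stabilization; once that is in hand, the compact generation of $\Sp(\iNS)$ is immediate, and the passage to the localization $\iNSp$ follows from well-documented properties of compactly generated stable $\infty$-categories under Verdier quotients by compactly generated subcategories.
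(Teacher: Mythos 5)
Your argument is correct and follows the same two-step route as the paper, which simply cites Proposition 1.4.3.7 of \cite{LurHigAlg} for the compact generation of $\Sp(\iNS)$ and Corollary 5.5.7.3 of \cite{LurToposBook} for the passage to the accessible localization $\iNSp$; you have essentially unpacked the proofs behind those two citations. Your explicit observation that the generating morphisms of $S$ run between compact objects of $\Sp(\iNS)$ is precisely the substantive hypothesis that makes the localization step work, so nothing is missing.
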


\begin{proof}
We have already seen that $\iNS$ is compactly generated (see Remark \ref{compGen}). Hence by Proposition 1.4.3.7 of \cite{LurHigAlg}, so is $\Sp(\iNS)$. Now the assertion for $\iNSp$, which is an accessible localization of $\Sp(\iNS)$, follows from Corollary 5.5.7.3 of \cite{LurToposBook}. \footnote{Note that the domains and the codomains of the morphisms in $S_0$ are all $\omega$-compact.}
\end{proof}

It also follows from Proposition 1.4.3.7 of \cite{LurHigAlg} that the essential image of the functor $\Pi:\hosc[\Sigma^{-1}]\functor\hSp(\iNS)^\op$  (see Proposition \ref{Ess}) lands inside the cocompact objects of $\hSp(\iNS)^\op$. The deployment of heavy machinery pays rich dividends at this point, viz., using Lemma \ref{Brown} we find that the triangulated category $\hNSp$ satisfies Brown representability. This is a major advantage of $\hNSp$ over $\NSH$ or other suspension stable homotopy categories of $C^*$-algebras like $\SHo$ of \cite{CunMeyRos}. Sometimes in the literature one studies Brown representability for abelian group valued functors that behave in a specific manner with respect to weak colimits. Here we use the formulation from Theorem 1.4.1.2 of \cite{LurHigAlg}.

\begin{thm}[Brown representability] \label{BR}
A functor $F:\hNSp^\op\functor\mathtt{Set}$ is representable if and only if it satisfies the following two conditions:

\begin{itemize}
 \item The canonical map $F(\coprod_\beta C_\beta)\map \prod_\beta F(C_\beta)$ is a bijection for every collection of objects $C_\beta\in\iNSp$, and
 \item for every pushout square 
 
 \beqn
 \xymatrix{
 C \ar[r]\ar[d] & C'\ar[d] \\
 D \ar[r] & D',
 }\eeqn in $\iNSp$, the induced map $F(D')\map F(C')\times_{F(C)} F(D)$ is surjective.
\end{itemize}
\end{thm}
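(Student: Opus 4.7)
The plan is to reduce the theorem to Lurie's Brown representability theorem for stable $\infty$-categories, namely Theorem 1.4.1.2 of \cite{LurHigAlg}. That theorem asserts that whenever $\cC$ is a compactly generated stable $\infty$-category, a functor $F\colon\h\cC^{\op}\functor\mathtt{Set}$ is representable if and only if it carries small coproducts in $\h\cC$ to products of sets and carries pushout squares in $\cC$ to weak pullback squares of sets, where the latter condition means that the canonical comparison map from the upper-left vertex surjects onto the set-theoretic pullback of the other three vertices. Setting $\cC=\iNSp$ converts this statement verbatim into the two bulleted conditions in the theorem.

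The verification of the structural hypotheses on $\cC=\iNSp$ has already been completed in the preceding material. Stability and presentability follow from Lemma \ref{pres} together with Definition \ref{NSpectra}: the $\infty$-category $\iNSp=S^{-1}\Sp(\iNS)$ is an accessible (Bousfield) localization of the stable presentable $\infty$-category $\Sp(\iNS)$, so it inherits both stability and presentability from its parent. Compact generation is precisely the content of Lemma \ref{Brown}. Thus the sufficiency direction, which is the substantive half of the statement, will follow by direct application of Lurie's theorem once the hypotheses are matched up.

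The necessity direction is standard and independent of compact generation. Any representable functor $\hNSp^{\op}(-,X)$ commutes with small coproducts, since a coproduct in $\iNSp$ becomes a product in $\iNSp^{\op}$, into which $X$ receives mapping-space limits. For any pushout square in $\iNSp$, passing to the opposite $\infty$-category $\iNSp^{\op}$ yields a pullback square; mapping into $X$ preserves pullbacks, so the associated square of mapping spaces in $\S$ is a homotopy pullback, and applying $\pi_0$ produces a surjection onto the set-theoretic pullback.

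The main, and essentially only, work is identifying the hypotheses of Lurie's theorem; since the analytic content concerning presentability, stability, and compact generation of $\iNSp$ was handled in Lemmas \ref{pres} and \ref{Brown}, no additional obstacle is anticipated. The payoff of having invested in the $\infty$-categorical machinery is precisely that this representability result, which is known to fail in $\NSH$ (and is problematic in $\SHo$), becomes a formal consequence of general theorems.
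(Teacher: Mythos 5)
Your proposal matches the paper's argument: the paper likewise derives the theorem by observing that $\iNSp$ is a compactly generated (stable, presentable) $\infty$-category via Lemma \ref{Brown}, and then invoking the formulation of Brown representability in Theorem 1.4.1.2 of \cite{LurHigAlg}. Your additional sketch of the (easy) necessity direction is consistent with that reference and does not change the route.
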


\begin{cor} \label{ProCopro}
 The triangulated category $\hNSp$ has arbitrary coproducts and products.
\end{cor}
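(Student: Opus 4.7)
The plan is to deduce both claims directly from the fact that $\iNSp$ is a \emph{presentable} stable $\infty$-category, as recorded in Lemma \ref{pres}. By definition every presentable $\infty$-category admits all small colimits; moreover, a standard result (Corollary 5.5.2.4 of \cite{LurToposBook}) guarantees that a presentable $\infty$-category is also complete, i.e. admits all small limits. In particular $\iNSp$ possesses arbitrary small coproducts and arbitrary small products.

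To transport these to the triangulated homotopy category, I would observe that $(\infty$-)coproducts and products in $\iNSp$ descend to coproducts and products in $\hNSp$: for any family $\{X_\alpha\}$ with coproduct $\coprod_\alpha X_\alpha$ in $\iNSp$, one has
\[
\hNSp\!\bigl(\textstyle\coprod_\alpha X_\alpha,Y\bigr) \;=\; \pi_0\,\Map\!\bigl(\coprod_\alpha X_\alpha,Y\bigr) \;\cong\; \pi_0\,\prod_\alpha \Map(X_\alpha,Y) \;=\; \prod_\alpha \hNSp(X_\alpha,Y),
\]
so $\coprod_\alpha X_\alpha$ is a coproduct in $\hNSp$ as well. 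The argument for products is dual (and uses that $\pi_0$ commutes with small products of spaces).

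As an alternative, at least for products, one can leverage the Brown representability theorem \ref{BR} that has just been established: given a family $\{C_\beta\}$, define $F\colon\hNSp^{\op}\functor\Sets$ by $F(X):=\prod_\beta \hNSp(X,C_\beta)$. The first Brown condition follows by interchanging the two products, and the second (surjectivity onto the Mayer--Vietoris-type fiber product) follows because each pushout square in $\iNSp$ induces a weak pushout in $\hNSp$ and an arbitrary product of surjections of sets remains surjective; the representing object is then $\prod_\beta C_\beta$. I do not anticipate any real obstacle here: the substantive work lies in establishing presentability (Lemma \ref{pres}) and Brown representability (Theorem \ref{BR}), and this corollary is essentially a formal consequence of either one.
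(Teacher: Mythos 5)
Your proof is correct and takes essentially the same approach as the paper, which likewise gets coproducts from presentability and offers both Brown representability and presentability (small limits) as routes to products. The only addition is your explicit check that $\infty$-categorical (co)products descend to $\hNSp$, a standard point the paper leaves implicit.
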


\begin{proof}
Existence of coproducts follows from the fact that $\iNSp$ is presentable. To see the existence of products using Brown representability argue as follows: For any collection of objects $X_\alpha\in\hNSp$ the functor $\prod_\alpha \hNSp(-,X_\alpha)$ is representable and the representing object is the desired product $\prod_\alpha X_\alpha$. It can also be deduced from the presentability of $\iNSp$, since presentable $\infty$-categories admit all small limits.
\end{proof}

\begin{rem} \label{dualBrown}
If a triangulated category $\cT$ is compactly generated, then it does not imply that $\cT^\op$ is also compactly generated. In fact, if a compactly generated triangulated category $\cT$ satisfies some extra hypotheses, then one can show that $\cT^\op$ is not well generated (see Appendix E of \cite{NeeBook}). Nevertheless, the opposite of a compactly generated triangulated category satisfies Brown representability \cite{KraBR}; hence $\hNSp^\op$ satisfies Brown representability.
\end{rem}

\begin{thm} \label{ExInc}
The noncommutative stable homotopy category $\NSH$ can be realised as a full triangulated subcategory of $\hNSp^\op$ via a canonical exact functor.
\end{thm}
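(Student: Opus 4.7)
The plan is to construct $\bar{\Pi}$ via the universal property of the localization defining $\NSH$, and then to verify full faithfulness by a direct comparison of Hom-groups between images of separable $C^*$-algebras.

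First I would post-compose the fully faithful exact functor $\Pi: \hosc[\Sigma^{-1}] \to \hSp(\iNS)^\op$ of Proposition \ref{Ess} with the opposite $L^\op: \hSp(\iNS)^\op \to \hNSp^\op$ of the accessible Bousfield localization from subsection \ref{NSpLoc}. By the very choice of the generating set $S_0 = \{\St(\theta(f)) \mid f \text{ surjective}\}$, the functor $L$ sends each $\St(\theta(f))$ to an equivalence in $\iNSp$; dualizing, every mapping cone comparison morphism $\theta(f): \ker(f) \to \C(f)$ (viewed in $\hosc[\Sigma^{-1}]$) is carried to an isomorphism in $\hNSp^\op$. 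Since $\NSH$ is the localization of the triangulated category $\hosc[\Sigma^{-1}]$ along precisely this class of morphisms, its universal property supplies a unique exact functor $\bar{\Pi}: \NSH \to \hNSp^\op$ rendering the evident triangle commutative.

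To establish full faithfulness of $\bar{\Pi}$, I would compare Hom-groups on both sides. The source side is computed by \eqref{PiDef}: one has $\NSH((A,n),(B,m)) \cong \dlim_r [[\Sigma^{r+n} A, \Sigma^{r+m} B]]$, the asymptotic homotopy classes of asymptotic homomorphisms. On the target side, the Remark preceding Proposition \ref{Ess} already identifies $\hSp(\iNS)^\op(\Pi A,\Pi B) \cong \dlim_r [\Sigma^r A, \Sigma^r B]$. Applying $L^\op$ and exploiting the calculus of fractions for the strongly saturated class $S$, morphisms in $\hNSp^\op$ between $\pi(A)$ and $\pi(B)$ can be expressed as a filtered colimit over roof diagrams in which the formally inverted maps are exactly the mapping cone comparison maps. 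The Houghton-Larsen-Thomsen universal characterization of $\NSH$, expressed through \eqref{PiDef}, is precisely the statement that this inverting procedure recovers the asymptotic homomorphism description, so the two colimits match.

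The main obstacle is this last step, namely making the localization calculus in $\iNSp$ concrete enough to recognize asymptotic homomorphisms in the resulting Hom-groups. Two inputs make this tractable: (a) the generators of $S$ are by construction the images of the mapping cone comparison maps, ensuring that the classes of morphisms being inverted on both sides genuinely coincide; and (b) the Houghton-Larsen-Thomsen theorem, which identifies the triangulated localization of $\hosc[\Sigma^{-1}]$ at those maps with the asymptotic homomorphism description. Together these yield the bijection of Hom-groups, whence $\bar{\Pi}$ is fully faithful, and its image is a full triangulated subcategory of $\hNSp^\op$ since $\bar{\Pi}$ is exact by construction.
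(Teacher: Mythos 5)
Your first half coincides with the paper's argument: compose $\Pi$ with $L^\op$, observe that $L$ inverts the generators $\St(\theta(f))$, and invoke the universal property of the Verdier localization defining $\NSH$ to obtain the induced exact functor. That part is fine. The gap is in your proof of full faithfulness. You claim that morphisms in $\hNSp^\op$ between $\pi(A)$ and $\pi(B)$ "can be expressed as a filtered colimit over roof diagrams in which the formally inverted maps are exactly the mapping cone comparison maps," and that the Houghton-Larsen--Thomsen characterization then identifies this with \eqref{PiDef}. Neither assertion is justified. The class $S$ being inverted in $\Sp(\iNS)$ is not the set $S_0$ but its strongly saturated closure --- by construction it consists of \emph{all} maps whose cone lies in the localizing subcategory $\Ind_\omega(\cA)$, a vastly larger class than the maps $\theta(f)$ --- and for a Bousfield localization of a large triangulated category there is no naive roof calculus computing $\hNSp(LX,LY)$; one has instead $\hNSp(LX,LY)\cong\hSp(\iNS)(X,LY)$, which gives no direct handle on the colimit you want. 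Moreover, the Houghton-Larsen--Thomsen theorem only describes the \emph{source}: it yields formula \eqref{PiDef} for $\NSH$, but says nothing about Hom-groups in $\hNSp$. So the crucial comparison --- that the Verdier quotient of the small category $\hosc[\Sigma^{-1}]$ by $\ker(V)$ maps fully faithfully into the Bousfield localization of the large category $\hSp(\iNS)$ --- is exactly what remains unproved.

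The standard (and the paper's) way to close this gap is the Neeman--Thomason--Krause theory of compactly generated localizations. One checks that the essential image of $\Pi^\op$ consists of compact objects of $\hSp(\iNS)$ (Proposition 1.4.3.7 of \cite{LurHigAlg} together with Lemma \ref{Brown}), so that $\ker(L)$ is the localizing subcategory generated by a set of compact objects. Then Corollary 7.2.2 of \cite{KraLoc} gives $\ker(L)^c=\ker(L)\cap\hSp(\iNS)^c$ with $\ker(L)^c$ the thick closure of $\Pi^\op(\ker(V^\op))$, and Theorem 7.2.1 and Lemma 4.7.1 of \cite{KraLoc} show that both $\NSH^\op\to\hSp(\iNS)^c/\ker(L)^c$ and $\hSp(\iNS)^c/\ker(L)^c\to\hNSp$ are fully faithful. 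Without some such input about compact generation, your argument does not go through; with it, the explicit asymptotic-homomorphism description of the Hom-groups is not needed at all.
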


\begin{proof}
We know from Proposition \ref{Ess} that the functor $\Pi:\hosc[\Sigma^{-1}]\functor\h\Sp(\iNS)^\op$ is fully faithful. Due to the compact generation of the triangulated category $\h\Sp(\iNS)$ (see Lemma \ref{Brown}), its localization with respect to the set of maps $S$ (described in the second paragraph of subsection \ref{NSpLoc}) can be viewed as a Bousfield localization $L:\h\Sp(\iNS)\functor\h\Sp(\iNS)$, whose essential image consists of $S$-local objects \cite{NeeBook}. This is precisely the triangulated category $\hNSp$. Consider the (solid) commutative diagram of triangulated categories:

\beq \label{VerLoc}
\xymatrix{
\ker(V)\ar[rr]^\Pi\ar[d] && \ker(L^\op) \ar[d] \\
\hosc[\Sigma^{-1}]\ar[d]_V\ar[rr]^{\Pi} &&\hSp(\iNS)^\op\ar[d]^{L^\op}\\
\NSH\ar@{-->}[rr]^{\pi} && \hNSp^\op,
}\eeq where $V$ denotes the Verdier quotient functor. The composite functor $L^\op\circ\Pi$ inverts the maps in $\{\theta(f):\ker(f)\map\C(f)\,|\, \text{$f$ surjective $*$-homomorphism in $\Csep$}\}^\op$ since $L^\op$ inverts $S_0^\op$. Thus it annihilates $\ker(V)$, which is the thick subcategory of $\hosc[\Sigma^{-1}]$ generated by the set $X=\{\textup{cone}(\theta(f)) \,|\, \text{ $f$ surjective in $\Csep$}\}$. Consequently, $L^\op\circ\Pi$ induces a unique (dashed) functor $\pi:\NSH\functor\hNSp^\op$, making the above diagram commute. In order to proceed it is useful to consider the opposite of the above diagram \eqref{VerLoc}:

\beq
\xymatrix{
\ker(V^\op)\ar[rr]^{\Pi^\op}\ar[d] && \ker(L) \ar[d] \\
\hosc[\Sigma^{-1}]^\op\ar[d]_{V^\op}\ar[rr]^{\Pi^\op} &&\hSp(\iNS)\ar[d]^{L}\\
\NSH^\op\ar@{-->}[rr]^{\pi^\op} && \hNSp.
}\eeq For any triangulated category $\cT$ let $\cT^c\subset\cT$ denote the thick subcategory of compact objects. Now $\ker(L)$ is the localizing subcategory of $\hSp(\iNS)$ generated by the set $\Pi^\op(X)$. The essential image $\Pi^\op(\hosc[\Sigma^{-1}]^\op)$ is contained inside $\h\Sp(\iNS)^c$ (see Proposition \ref{Ess}). Using Lemma \ref{Brown} and Corollary 7.2.2 of \cite{KraLoc} one deduces that $\ker(L)^c = \ker(L)\cap \hSp(\iNS)^c$; moreover, $\ker(L)^c$ is the thick closure of $\Pi^\op(\ker(V^\op))$ inside $\ker(L)$. Thus we obtain the following commutative diagram:

\beqn
\xymatrix{
\ker(V^\op)\ar[rr]^{\Pi^\op}\ar[d] && \ker(L)^c\ar[rr]^{\subset}\ar[d]^{} && \ker(L) \ar[d] \\
\hosc[\Sigma^{-1}]^\op\ar[d]_{V^\op}\ar[rr]^{\Pi^\op} && \hSp(\iNS)^c \ar[rr]^\subset\ar[d] &&\hSp(\iNS)\ar[d]^{L}\\
\NSH^\op\ar[rr]^{\theta_1} && \hSp(\iNS)^c/\ker(L)^c\ar[rr]^{\theta_2} && \hNSp,
}\eeqn such that $\theta_2\circ\theta_1 =\pi^\op$. From Theorem 7.2.1 (3) of \cite{KraLoc} one concludes that $\theta_2$ is fully faithful and from Lemma 4.7.1 (1) of \cite{KraLoc} one concludes that $\theta_1$ is fully faithful. Thus $\pi^\op$ is fully faithful and hence so is $\pi:\NSH\functor\hNSp^\op$.  
\end{proof}

Recently Schwede defined a {\em topological} triangulated category to be one, which is triangle equivalent to the homotopy category of a {\em stable cofibration category} as a triangulated category (see Definition 1.4 of \cite{SchTopTri}). The Theorem \ref{ExInc} above already says that $\NSH$ is morally a topological triangulated category. Now we make it precise in terms of the above definition.

\begin{thm} \label{TopTri}
The triangulated category $\NSH$ is topological.
\end{thm}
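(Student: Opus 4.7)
The plan is to combine the fully faithful embedding $\pi : \NSH \hookrightarrow \hNSp^\op$ supplied by Theorem \ref{ExInc} with the model categorical presentation of $\iNSp$ coming from its presentability, and then carve out a stable cofibration category whose homotopy category is precisely $\NSH$.

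First I would lift the triangulated embedding $\pi$ to the $\infty$-categorical level. Let $\cD \subset \iNSp^\op$ denote the full $\infty$-subcategory spanned by those objects whose image in $\hNSp^\op$ lies in the essential image of $\pi$. Because $\NSH$ is a full triangulated subcategory of $\hNSp^\op$, it is closed under shifts and cofibers, so $\cD$ is closed under finite limits and colimits in the stable $\infty$-category $\iNSp^\op$. Hence $\cD$ is itself a stable $\infty$-subcategory of $\iNSp^\op$, and by construction $\h\cD \simeq \NSH$ as triangulated categories.

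Second I would realize $\iNSp^\op$ by a stable model category. Since $\iNSp$ is presentable stable by Lemma \ref{pres} and Definition \ref{NSpectra}, Proposition A.3.7.6 of \cite{LurToposBook} supplies a combinatorial simplicial stable model category $\cM$ whose underlying $\infty$-category is $\iNSp$. The opposite $\cM^\op$ (with cofibrations and fibrations interchanged) is again a stable model category, and retaining only its cofibrations and weak equivalences yields a stable cofibration category in the sense of Definition 1.4 of \cite{SchTopTri} whose homotopy category is $\hNSp^\op$. Now restrict to the full subcategory $\cN \subset \cM^\op$ on cofibrant objects whose homotopy type belongs to the essential image of $\cD$. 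The relevant axioms of a cofibration category pass to $\cN$: the Quillen factorizations stay in $\cN$ because an intermediate object of any factorization is weakly equivalent to one of the endpoints (hence has the same homotopy type, which lies in $\NSH$), while pushouts along cofibrations stay in $\cN$ because at the level of $\h\cM^\op \simeq \hNSp^\op$ they realise the formation of cofibers and extensions, both of which preserve the triangulated subcategory $\NSH$. Stability of $\cN$ is inherited from the stability of $\h\cN \simeq \NSH$ under (de)suspension, which is built into $\NSH$ by construction. Consequently $\cN$ is a stable cofibration category with $\h\cN \simeq \NSH$, establishing the theorem.

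The main obstacle I anticipate is the bookkeeping in the final restriction step, where one must verify that the cofibration category axioms genuinely descend from $\cM^\op$ to the full subcategory $\cN$. This reduces entirely to triangulated closure properties of $\NSH$ inside $\hNSp^\op$, which are guaranteed by Theorem \ref{ExInc}, but the explicit verification that every factorization can be arranged within $\cN$, and that the loop/suspension equivalence restricts correctly, requires a careful choice of representatives up to weak equivalence.
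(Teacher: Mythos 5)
Your proposal is correct and follows essentially the same route as the paper: embed $\NSH$ fully faithfully into $\hNSp^\op$ via Theorem \ref{ExInc}, present the presentable stable $\infty$-category by a combinatorial simplicial model category using Proposition A.3.7.6 of \cite{LurToposBook}, pass to the opposite model structure, and take the underlying stable cofibration category. The only difference is that where you verify by hand that the cofibration category axioms restrict to the full subcategory of objects with homotopy type in $\NSH$, the paper simply cites the general fact that a full triangulated subcategory of a topological triangulated category is topological (Proposition 1.5 of \cite{SchTopTri}), which is exactly the statement your final step re-proves.
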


\begin{proof}
Every full triangulated subcategory of a topological triangulated category is itself topological (see Proposition 1.5 of \cite{SchTopTri}). Since we just showed that $\NSH$ is a full triangulated subcategory of $\hNSp^\op$, it suffices to show that $\hNSp^\op$ is topological. Now $\hNSp^\op$ is itself a full triangulated subcategory of $\hSp(\iNS)^\op$. Since $\Sp(\iNS)$ is a presentable $\infty$-category, there is a combinatorial simplicial model category $\cA$, whose underlying $\infty$-category is equivalent to $\Sp(\iNS)$ (see Proposition A.3.7.6 of \cite{LurToposBook}). Endow $\cA^\op$ with the opposite model structure and consider the underlying cofibration category, i.e., the full subcategory $\cA^\op_c$ consisting of all the cofibrant objects with the associated weak equivalences and cofibrations. The cofibration category $\cA^\op_c$ is stable owing to the stability of $\Sp(\iNS)^\op$ and its homotopy category is equivalent to $\hSp(\iNS)^\op$ by construction. This argument can be used to also prove that $\NSH^\op$ is topological.
\end{proof}

\begin{rem} \label{locallyConvex}
The $\infty$-category $\iCsep$ (resp. $\iNS$) is an ideal framework for homotopy theory of separable $C^*$-algebras (resp. pointed noncommutative spaces). In fact, the methodology is applicable to a more general class of algebras at the expense of added complexity. Let us indicate how to proceed in the case of locally convex algebras. Let $\LC$ be the category of locally convex algebras and $\LW$ be the subcategory of {\em diffotopy equivalences} (see Definition 4.1.3 of \cite{CunTho}). Applying the Dwyer--Kan simplicial localization to the pair $(\LC,\LW)$ produces a simplicial category \cite{DwyKan}, whose homotopy coherent nerve is a simplicial set. We can take the fibrant replacement of this simplicial set with respect to the Joyal model structure on simplicial sets (see Theorem 2.4.6.1 of \cite{LurToposBook}) to obtain an $\infty$-category $\LC_\infty$, that serves as a good model. There are some other natural candidates like algebraic homotopy equivalences that may replace diffotopy equivalences in the above construction.
\end{rem}

\begin{rem} \label{EbuKK}
We constructed stabilizations of the category of separable $C^*$-algebras and noncommutative pointed spaces as stable $\infty$-categories, namely, $\Sp(\iCsep)$ and $\iNSp$ respectively. Actually we can construct more interesting stable $\infty$-categories via (co)localizations. We let $M_1=\{A\map A\prot\cpt\,|\, A\in\Csep\}$, $M_2=\{A\map A\prot M_2(A)\,|\, A\in\Csep\}$ and $M_1^\op$, $M_2^\op$ denote the corresponding sets of morphisms in the opposite category $\Csep^\op$. For any set $T$ of morphisms in $\Sp(\iNS)$ let $\langle T\rangle$ denote the strongly saturated collection generated by $T$ by the procedure described in the second paragraph of subsection \ref{NSpLoc}. Localizing $\Sp(\iNS)$ with respect to the following strongly saturated collections of morphisms
 \begin{itemize}
  \item $\langle S\cup \St(M_1^\op)\rangle$,
  \item $\langle S\cup\St(M_2^\op)\rangle$,
 \end{itemize} one obtains stable $\infty$-categories, whose opposite stable $\infty$-categories are our models for 
 \begin{itemize}
  \item $\E$-theory (denoted by $\mathtt{E_\infty}$),
  \item connective $\E$-theory or $\bu$-theory (denoted by $\mathtt{bu_\infty}$)
  \end{itemize} respectively. If one localizes $\Sp(\iNS)$ with respect to $\langle S'\cup\St(M_1^\op)\rangle$, where $S'\subset S$ arises from those surjections in $\Csep$ that admit a completely positive contractive splitting, then one obtains yet  another stable $\infty$-category. Its opposite stable $\infty$-category, denoted by $\mathtt{KK}_\infty$, is our model for $\KK$-theory. Since $S'\subset S$, one obtains a canonical exact $\infty$-functor $\mathtt{KK}_\infty\functor\mathtt{E}_\infty$. One could also localize $\Sp(\iNS)$ with respect to $\langle S'\rangle$ only. The opposite of this localized stable $\infty$-category will model the triangulated category $\SHo$ that appeared in \cite{CunMeyRos}; more precisely, $\SHo$ will be contained as a full triangulated subcategory of its homotopy category. All triangulated categories in sight will be topological.
\end{rem}

\subsection{A comparison between $\NSH$ and {\O}stv{\ae}r's stable homotopy category} \label{Ostvaer}
Let us recall from \cite{ThomThesis} that a covariant functor from $\Csep$ (not viewed as a topological category) to a triangulated category $\cT$ with suspension functor $\Sigma$ is called a {\em triangulated homology theory} if it is homotopy invariant and for every short exact sequence 
\beq
0\map A\map B\map C\map 0
\eeq in $\Csep$ the diagram induced by $H$ 

\beq \label{exSeq}
H(A)\map H(B)\map H(C)\map\Sigma H(A)
\eeq is an exact triangle in $\cT$. Furthermore, the exact triangle \eqref{exSeq} should be natural with respect to morphisms of exact sequences. The canonical functor $\iota:\Csep\functor\NSH$ can be characterized as the universal triangulated homology theory (see Theorem 3.3.6 of \cite{ThomThesis}). 

In \cite{Ost} the author constructs the stable model category of $C^*$-algebras as follows: first an unstable model category containing $C^*$-algebras is constructed as a cubical set valued presheaf category. Then one considers (bigraded) spectrum objects over the unstable model category to produce a stable model category. Finally the stable homotopy category $\SH^*$ is obtained as a localization, which introduces the right formal properties that one should expect from this motivic setup. It follows from the above characterization of $\NSH$ as the universal triangulated homology theory that there is a canonical exact functor $C:\NSH\functor\SH^*$ (see Corollary 4.43 of \cite{Ost}). The functor $C$ is reminiscent of the functor $c:\mathcal{SH}\functor\mathcal{SH}(\CC)$ from the classical stable homotopy category to the motivic stable homotopy category (see, for instance, \cite{Levine}) and as such should be studied more thoroughly.

\section{Noncommutative stable (co)homotopy groups and (co)limits} \label{Htpy}
Using the stable $\infty$-category of noncommutative spectra $\iNSp$ one can produce numerous generalized (co)homology theories for $C^*$-algebras. We are going to investigate them in our subsequent work. Here we discuss the most fundamental example, i.e., noncommutative stable (co)homotopy. It is (co)represented by the (noncommutative) sphere spectrum $\Sigma^\infty_S (\CC)$.

Consider the functors $\NSH(-,\Sigma^n\CC)$ for all $n\in\NN$ on the category $\Csep$. If we insert $\C(X,x)$ into the first variable, where $(X,x)$ is a finite pointed CW complex, the functors give the stable homotopy groups $\pi_n(X,x)$ naturally. Similarly the functors $\NSH(\CC, \Sigma^n(-))$ generalize the stable cohomotopy groups of finite pointed CW complexes. These functors give reasonable definitions of noncommutative stable (co)homotopy groups for separable $C^*$-algebras \cite{ConBook}. A natural question is how to proceed beyond separable $C^*$-algebras. Our methodology provides a satisfactory answer to this question.

\begin{rem} \label{CunMeyRos}
In \cite{CunMeyRos} the authors constructed a suspension stable homotopy category $\SHo$ for all (possibly nonseparable) $C^*$-algebras. Unfortunately, in this triangulated category one has the following problem (see Section 6.3.1 of \cite{CunMeyRos}): $$\SHo(\oplus_{n\in\NN} A_n, B)\ncong \prod_{n\in\NN} \SHo(A_n,B).$$ Hence the authors in \cite{CunMeyRos} remarked that it may not be the right definition for the suspension stable homotopy category beyond separable $C^*$-algebras.
\end{rem}

\noindent
In contrast our triangulated homotopy category of noncommutative spectra $\hNSp$ admits arbitrary coproducts and products (see Corollary \ref{ProCopro}). Thus it is more natural to introduce the noncommutative stable (co)homotopy groups of noncommutative pointed spaces using noncommutative spectra. In the sequel we show that in this setup it is quite simple to describe the behaviour of the resulting theory under sequential (co)limits.

\subsection{Noncommutative stable (co)homotopy groups} 
In view of Theorem \ref{ExInc} and Remark \ref{Pi} we propose the following definition of noncommutative stable (co)homotopy groups of noncommutative spaces. We leave out the corresponding definitions for noncommutative spectra, which are absolutely clear.

\begin{defn}
 For any pointed noncommutative space $A\in\iNS$ we define
 \beqn
\hNSp(\Sigma^\infty_S (A), \Sigma^\infty_S(\CC)) &=& \text{noncommutative stable cohomotopy group of $A$, and}\\
\hNSp(\Sigma^\infty_S(\CC),\Sigma^\infty_S(A)) &=& \text{noncommutative stable homotopy group of $A$.}
\eeqn The higher and the lower noncommutative stable (co)homotopy groups are defined in a predictable manner by suspending the variables appropriately.

\noindent
For any separable $C^*$-algebra $A$, we set for all $n\geqslant 0$
\beqn
\pi^n(A) &:=&\hNSp(\Sigma^\infty_S(\Sigma^n\CC),\Sigma^\infty_S(A))\cong\NSH(A,\Sigma^n\CC), \text{ and}\\
\pi_n(A) &:=&\hNSp(\Sigma^\infty_S(\Sigma^n A),\Sigma^\infty_S(\CC))\cong\NSH(\CC,\Sigma^n A).
\eeqn If $n < 0$ one puts the suspension functor in the other coordinate to define the corresponding groups. One recovers via $\pi_n(A)$ (resp. $\pi^n(A)$) the $n$-th noncommutative stable cohomotopy (resp. noncommutative stable homotopy) group of $A$ as defined in \cite{ThomThesis}.
\end{defn}

\begin{rem} \label{nonsep}
Our formalism is able to treat the noncommutative stable (co)homotopy of nonseparable $C^*$-algebras (or noncommutative pointed compact Hausdorff spaces) as well. Indeed, any nonseparable $C^*$-algebra can be written as a filtered colimit of its separable $C^*$-subalgebras. One can view this as a projective system in $\iNS$ and apply the above definitions to its $\infty$-limit in $\iNS$. Our noncommutative stable (co)homotopy groups for genuinely nonseparable $C^*$-algebras will in general not agree with those defined in \cite{CunMeyRos}. 
\end{rem}

\begin{rem} \label{M2unstable}
Noncommutative stable homotopy theory does not satisfy matrix stability or $C^*$-stability. It is known that $\pi^0(M_2(\CC))\simeq 0$, whereas $\pi^0(\CC)\simeq \ZZ$ \cite{MyNGH}. Nevertheless, $M_2(\CC)$ is not a zero object in $\NSH$. If it were a zero object in $\NSH$, then after an exact localization it would continue to be a zero object in the bivariant $\E$-theory category. However, it is easily seen to be a non-zero object in the bivariant $\E$-theory category because $\E_0(M_2(\CC))\simeq\ZZ$.
\end{rem}

\subsection{Behaviour with respect to (co)limits} 

\noindent
There are stable (co)homotopy groups for general pointed compact Hausdorff spaces that are studied in shape theory (see, for instance, \cite{MarSeg}). One of the standard techniques to compute them is to break up a complicated (noncommutative) space into a diagram of small computable (noncommutative) spaces. For the benefit of the reader we illustrate this principle by recalling a basic property of stable cohomotopy groups from shape theory in the dual setting of commutative $C^*$-algebras.

\begin{thm}[see, for instance, \cite{Nowak}] \label{Cont}
Let $\{A_n\}_{n\in\NN}$ be a countable inductive system of commutative separable $C^*$-algebras. Then for all $k\in\ZZ$ one has an isomorphism $$\pi_k({\dlim}_n A_n)\cong{\dlim}_n\pi_k(A_n).$$
\end{thm}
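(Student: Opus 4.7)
By Gelfand--Na{\u\i}mark duality the given inductive system $\{A_n\}_{n \in \NN}$ of commutative separable $C^*$-algebras corresponds contravariantly to an inverse system $\{X_n\}_{n \in \NN}$ of pointed compact metrizable spaces with $\dlim_n A_n \cong C_0(X_\infty, x_\infty)$, where $X_\infty := \varprojlim_n X_n$. The plan is to reduce the theorem to the classical shape-theoretic continuity of stable cohomotopy for sequential inverse limits of pointed compact metrizable spaces, using the $\infty$-categorical machinery of Section~\ref{Infty} to identify $\pi_k$ of a commutative $C^*$-algebra with classical shape-theoretic cohomotopy of its Gel'fand dual.

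For a finite pointed CW complex $P$, the contravariant inclusion of $\SWf$ into $\NSH$ recalled in Section~\ref{NSH} identifies $\pi_k(C_0(P, p)) = \NSH(\CC, \Sigma^k C_0(P, p))$ with the $k$-th classical stable cohomotopy group $\pi^k_s(P, p)$. For an arbitrary pointed compact metrizable space $X$ I would write $X = \varprojlim_\alpha P_\alpha$ as a cofiltered limit of finite pointed polyhedra (via the standard ANR-neighborhood construction after embedding $X$ into the Hilbert cube); dualizing gives $C_0(X, x) \cong \dlim_\alpha C_0(P_\alpha, p_\alpha)$ in $\Csep$. Applying such resolutions to each $X_n$ produces a double-indexed polyhedral inverse system with $X_\infty \cong \varprojlim_{(n, \alpha)} P^n_\alpha$.

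Granting the extended identification $\pi_k(C_0(X)) \cong \dlim_\alpha \pi^k_s(P_\alpha) =: \pi^k_s(X)$ (shape-theoretic stable cohomotopy) for general compact metrizable $X$, the desired conclusion follows immediately by the standard interchange of filtered colimits of abelian groups:
\[
\pi_k(\dlim_n A_n) \cong \pi^k_s(X_\infty) \cong \dlim_{(n, \alpha)} \pi^k_s(P^n_\alpha) \cong \dlim_n \dlim_\alpha \pi^k_s(P^n_\alpha) \cong \dlim_n \pi_k(A_n).
\]

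The main obstacle is the identification $\pi_k(C_0(X)) \cong \pi^k_s(X)$ for arbitrary (non-CW) compact metrizable $X$, which is itself a special case of the theorem applied to the polyhedral resolution. To break the apparent circularity I would exploit that $\st(\CC) = \Sigma^\infty_S(\CC) \in \iNSp$ is a compact object: this follows from the compactness of its image in $\hSp(\iNS)$ used in the proof of Theorem~\ref{ExInc}, together with the fact that the accessible Bousfield localization $L$ constructed in subsection~\ref{NSpLoc} preserves compactness of the generating class. Since the Yoneda embedding $j \colon \iCsep^{\op} \to \iNS$ is limit-preserving, the colimit $C_0(X) = \dlim_\alpha C_0(P_\alpha)$ in $\Csep$ becomes the limit $j(C_0(X)) \cong \lim_\alpha j(C_0(P_\alpha))$ in $\iNS$, and combining compactness of $\st(\CC)$ in $\iNSp$ with Brown representability for $\hNSp$ (Theorem~\ref{BR}) one propagates the finite-polyhedral identification through the resolution. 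This is the technical heart of the argument, the rest being a cofinality and interchange-of-colimits exercise.
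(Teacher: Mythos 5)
Your reduction to shape theory is natural, but the step you yourself flag as ``the technical heart'' is exactly where the argument breaks down, and the proposed fix does not work. First, there is a variance problem: $\pi_k(A)=\hNSp(\Sigma^\infty_S(\Sigma^k A),\Sigma^\infty_S(\CC))$ places the varying algebra in the \emph{contravariant} slot, so compactness of $\st(\CC)$ --- which controls filtered colimits in the covariant variable of $\hNSp(\st(\CC),-)$, i.e.\ the homotopy groups $\pi^n$ --- says nothing about $\pi_k$. What you would need is the dual half of Proposition \ref{lim2}, which requires $\st(\CC)$ to be \emph{cocompact} in $\hNSp$; that is not established and is not to be expected from compact generation (cf.\ Remark \ref{dualBrown}). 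Second, and more fundamentally, the $C^*$-inductive limit $\dlim_\alpha C_0(P_\alpha)$ is a strict $1$-categorical colimit in $\Csep$, and there is no reason it computes the $\infty$-categorical limit of the $j(C_0(P_\alpha))$ in $\iNS$; even granting that, $\Sigma^\infty_S=L\circ\Sigma^\infty$ is a composite of left adjoints and preserves colimits of noncommutative spaces (equivalently, inverse limits of $C^*$-algebras, which is what Theorem \ref{hNSpCont} exploits), not the limits you need here. So the circularity you identify is not actually broken, and the identification $\pi_k(C_0(X))\cong\pi^k_s(X)$ for non-polyhedral $X$ remains unproved.

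The missing ingredient is $C^*$-algebraic rather than $\infty$-categorical. The paper's proof first uses Corollary 17 of \cite{DadAsymHom} to replace asymptotic homotopy classes by genuine homotopy classes, giving $\pi_k(\dlim_n A_n)\cong\dlim_m[\Sigma^m\CC,\Sigma^{m+k}\dlim_n A_n]$; it then commutes $\Sigma^{m+k}$ past the inductive limit by $\prot$-continuity, and invokes the \emph{semiprojectivity of $\Sigma^m\CC$ in the category of commutative separable $C^*$-algebras} to pull the strict inductive limit out of $[\Sigma^m\CC,-]$; an interchange of filtered colimits of abelian groups finishes the argument. Semiprojectivity is precisely the assertion that maps out of $\Sigma^m\CC$ see the strict inductive limit as if it were a homotopy colimit, which is the content your proposal assumes without proof. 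Note that this is also where commutativity is genuinely used: $\Sigma^m\CC$ fails to be semiprojective among all separable $C^*$-algebras for $m>1$ (Remark \ref{Dad}), so any argument that does not visibly exploit commutativity at this stage should be treated with suspicion.
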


\begin{proof}
Let us first assume that $k\geqslant 0$. Using Corollary 17 of \cite{DadAsymHom} we may conclude that $\pi_k({\dlim}_n A_n)\cong{\dlim}_m [\Sigma^m\CC,\Sigma^{m+k}{\dlim}_n A_n]$. Since $\Sigma^m\CC$ is $\prot$-continuous for all $m\in\NN$ we have $[\Sigma^m\CC,\Sigma^{m+k}{\dlim}_n A_n]\cong [\Sigma^m\CC,{\dlim}_n\Sigma^{m+k} A_n]$. Moreover, $\Sigma^m\CC$ is semiprojective in the category of commutative separable $C^*$-algebras for all $m\in\NN_{\geqslant 1}$, whence $[\Sigma^m\CC,{\dlim}_n\Sigma^{m+k} A_n]\cong{\dlim}_n [\Sigma^m\CC,\Sigma^{m+k} A_n]$. Thus we conclude

\beqn
\pi_k({\dlim}_n A_n) &=& {\dlim}_m[\Sigma^m\CC,\Sigma^{m+k}{\dlim}_n A_n],\\ 
&\cong&{\dlim}_m [\Sigma^m\CC,{\dlim}_n \Sigma^{m+k}A_n],\\
 &\cong& {\dlim}_m{\dlim}_n [\Sigma^m\CC,\Sigma^{m+k} A_n],\\
 &\cong& {\dlim}_n{\dlim}_m [\Sigma^m\CC,\Sigma^{m+k} A_n],\\
 &=& {\dlim}_n \pi_k (A_n).
\eeqn If $k<0$ then one puts the suspension $\Sigma^k$ in the first variable and argues similarly.
\end{proof}

\begin{rem} \label{Dad}
The above argument does not generalize to all noncommutative separable $C^*$-algebras, since $\Sigma^m\CC$ fails to be semiprojective in the category of all separable $C^*$-algebras for $m>1$ \cite{SorThi}.
\end{rem}

\begin{cor}
The noncommutative stable cohomotopy groups of a commutative separable $C^*$-algebra are countable in each degree. 
\end{cor}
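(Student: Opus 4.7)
The plan is to present any commutative separable $C^*$-algebra as a sequential filtered colimit of $C^*$-algebras of pointed finite CW complexes, apply Theorem~\ref{Cont} to reduce to the finite case, and conclude by countability of morphism groups in $\SWf$.

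First, using the Gelfand--Na{\u{\i}}mark correspondence I would write a commutative separable $C^*$-algebra as $A\cong\C(X,x)$ for some pointed compact metrizable space $(X,x)$. A classical shape-theoretic construction---embed $X$ into the Hilbert cube and approximate by a decreasing sequence of finite pointed polyhedral neighbourhoods---exhibits $(X,x)$ as the pointed inverse limit of a countable sequence $\{(X_n,x_n)\}_{n\in\NN}$ of pointed finite CW complexes. Dualizing produces a sequential filtered colimit presentation $A\cong{\dlim}_n A_n$ in $\Csep$ with $A_n:=\C(X_n,x_n)$.

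Next I would invoke Theorem~\ref{Cont} to obtain $\pi_k(A)\cong{\dlim}_n \pi_k(A_n)$ for every $k\in\ZZ$, reducing the problem to checking that each $\pi_k(A_n)$ is countable, since a sequential colimit of countable groups is countable. Because $X_n$ is a pointed finite CW complex, $A_n$ lies in the essential image of the Spanier--Whitehead embedding $\SWf\hookrightarrow\NSH$. Consequently $\pi_k(A_n)\cong\NSH(\CC,\Sigma^k A_n)$ coincides with the corresponding morphism group in $\SWf$, which is computed as ${\dlim}_r [\Sigma^r\CC,\Sigma^{r+k}A_n]$. Each term $[\Sigma^r\CC,\Sigma^{r+k}A_n]$ is a countable abelian group, being the set of pointed homotopy classes of maps between finite pointed CW complexes (in fact finitely generated by Serre's theorem).

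The only non-routine ingredient is the existence of a \emph{sequential} (rather than merely cofiltered) inverse system of finite CW approximations to $(X,x)$, which relies on the metrizability of $X$ and is a standard fact from shape theory. Everything else is a direct combination of Theorem~\ref{Cont} with the countability of morphism groups between finite spectra.
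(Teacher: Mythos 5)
Your argument is correct and is essentially the paper's own proof: approximate the (pointed) compact metrizable spectrum by an inverse sequence of finite complexes, dualize, apply Theorem~\ref{Cont}, and use countability of stable cohomotopy of finite complexes. The only cosmetic difference is that you absorb the nonunital case by working with pointed spaces and $\C(X,x)$ from the outset, whereas the paper treats the unital case first and then deduces the general case by excision.
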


\begin{proof}
Let us first assume that $A$ is a separable commutative unital $C^*$-algebra. Its spectrum $X$ is a compact metrizable space. Being a compact metrizable space, it admits a description as a countable inverse limit of finite complexes, i.e., $X\cong\ilim_n X_n$ with each $X_n$ a finite complex (see, for instance, Theorem 7 of \cite{MarSeg}). This inverse limit expresses $\C(X)$ as a direct limit of commutative $C^*$-algebras ${\dlim}_n \C(X_n)$, whose noncommutative stable cohomotopy groups can be computed using Theorem \ref{Cont}, i.e., $\pi_*(\C(X))\cong{\dlim}_n \pi_*(\C(X_n))$. Note that $\pi_*(\C(X_n))$ is computable in terms of stable cohomotopy of the finite complex $X_n$. Moreover, the stable cohomotopy groups of a finite complex are countable in each degree. The general case involving nonunital $C^*$-algebras now follows by excision.
\end{proof}

\noindent
Since $\iNSp$ is presentable, it admits all small limits and colimits. For simplicity we only treat the case of countable filtered (co)limits or sequential (co)limits in the sequel. In this case there is an elementary notion of a homotopy (co)limit in a triangulated category with (co)products \cite{BokNee}, which suffices for our purposes. Let $\cT$ be a triangulated category with products and let $\Sigma$ denote its suspension functor. Let $\{A_n,\alpha_n\}_{n\in\NN}$ be a countable inverse system in $\cT$. Then its homotopy limit $\holim A_n$ is defined by the following exact triangle: \beq \label{holim} \holim A_n\map {\prod}_{n=0}^\infty A_n\overset{1-\mu}{\map} {\prod}_{n=0}^\infty A_n\map \Sigma\holim A_n .\eeq Here the map $\mu: \prod_{n=0}^\infty A_n\map \prod_{n=0}^\infty A_n$ sends the factor $A_n$ in the domain to the factor $A_{n-1}$ in the codomain via $\alpha_n$. Dually, in a triangulated category with coproducts the homotopy colimit $\hocolim A_n$ of a directed system $\{A_n,\alpha_n\}_{n\in\NN}$ is defined by the following exact triangle: \beq \label{hocolim} \Sigma^{-1}\hocolim A_n\map {\coprod}_{n=0}^\infty A_n\overset{1-\mu}{\map} {\coprod}_{n=0}^\infty A_n\map \hocolim A_n .\eeq Here the map $\mu: \coprod_{n=0}^\infty A_n\map \coprod_{n=0}^\infty A_n$ is defined similarly.

\begin{prop}
 Let $\{A_n,\alpha_n\}_{n\in\NN}$ be a countable inverse system in $\hNSp$. Then for all $B\in\hNSp$ there is a short exact sequence of abelian groups $$ 0\map {\ilim}^1\hNSp(B,\Sigma^{-1} A_n) \map\hNSp(B,\holim A_n)\map\ilim\hNSp(B, A_n)\map 0.$$
\end{prop}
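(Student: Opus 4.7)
The approach is the standard Milnor-type argument: apply the cohomological functor $\hNSp(B,-)$ to the defining exact triangle of $\holim A_n$ from \eqref{holim}, and identify the $\ilim$ and $\ilim^1$ terms from the resulting long exact sequence. Since $\hNSp$ has all small products (Corollary \ref{ProCopro}) and is triangulated, the triangle
$$ \holim A_n \map {\prod}_{n=0}^\infty A_n \overset{1-\mu}{\map} {\prod}_{n=0}^\infty A_n \map \Sigma\holim A_n $$
is well-defined and exact in $\hNSp$.

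I would then apply the representable functor $\hNSp(B,-)$, which is cohomological, to obtain a long exact sequence of abelian groups. Being a $\Hom$-functor, it preserves products, so $\hNSp(B,\prod_n A_n)\cong\prod_n\hNSp(B,A_n)$; likewise $\hNSp(B,\Sigma^{-1}\prod_n A_n)\cong \prod_n\hNSp(B,\Sigma^{-1} A_n)$ since $\Sigma^{-1}$ is an equivalence and thus commutes with products. Consequently the relevant segment of the long exact sequence becomes
$$ {\prod}_n\hNSp(B,\Sigma^{-1}A_n) \overset{1-\mu^*}{\map} {\prod}_n\hNSp(B,\Sigma^{-1}A_n) \map \hNSp(B,\holim A_n) \map {\prod}_n\hNSp(B,A_n) \overset{1-\mu^*}{\map} {\prod}_n\hNSp(B,A_n). $$

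The last step is to invoke the standard algebraic identification: for any countable inverse system $\{G_n,\beta_n\}$ of abelian groups, $\ker(1-\mu^*\colon \prod G_n \to \prod G_n)\cong\ilim G_n$ and $\coker(1-\mu^*)\cong\ilim^1 G_n$. Applying this with $G_n=\hNSp(B,A_n)$ on the right and $G_n=\hNSp(B,\Sigma^{-1}A_n)$ on the left, and extracting the kernel/cokernel portions of the four-term exact sequence above, yields the desired short exact sequence
$$ 0\map\ilim^1\hNSp(B,\Sigma^{-1}A_n)\map\hNSp(B,\holim A_n)\map\ilim\hNSp(B,A_n)\map 0. $$

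There is no serious obstacle here: the whole argument is a direct consequence of the existence of products in $\hNSp$ and the cohomological nature of $\hNSp(B,-)$. The only point that needs verification is that the transition maps $\mu^*$ induced on $\prod_n \hNSp(B,A_n)$ match the transition maps of the inverse system $\{\hNSp(B,A_n),\alpha_n^*\}$, which is immediate from the definition of $\mu$ in \eqref{holim}.
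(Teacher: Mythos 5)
Your proposal is correct and is essentially identical to the paper's own proof: both apply the cohomological functor $\hNSp(B,-)$ to the defining triangle \eqref{holim}, use that representable functors convert products to products, and identify the resulting kernel and cokernel with $\ilim$ and $\ilim^1$. Your added remark about checking that the induced maps match the transition maps $\alpha_n^*$ is a fine (if routine) point of care that the paper leaves implicit.
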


\begin{proof}
We include a proof to emphasize the simplicity of the arguments in this setup. Applying the functor $\hNSp(B,-)$ to the exact triangle \eqref{holim} we get a long exact sequence: $$\hNSp(B,\holim A_n)\map {\prod}_{n=0}^\infty\hNSp(B, A_n)\overset{1-\mu}{\map}{\prod}_{n=0}^\infty \hNSp(B, A_n)\map \hNSp(B,\Sigma\holim A_n) .$$ Thus we get a short exact sequence $$0\map\C \map \hNSp(B,\holim A_n) \map \K\map 0, \text{ where} $$ $$\K = \ker (\prod_{n=0}^\infty \hNSp(B,A_n)\overset{1-\mu}{\map}\prod_{n=0}^\infty \hNSp(B,A_n)) \text{ and}$$ $$\C = \coker (\prod_{n=0}^\infty \hNSp(B,\Sigma^{-1}A_n)\overset{1-\mu}{\map}\prod_{n=0}^\infty \hNSp(B,\Sigma^{-1}A_n)).$$ It is easy to see that $\K=\ilim\hNSp(B,A_n)$ and $\C=\ilim^1 \hNSp(B,\Sigma^{-1} A_n)$.
\end{proof}

\begin{prop} \label{lim1}
 Let $\{A_n,\alpha_n\}_{n\in\NN}$ be a countable directed system in $\hNSp$. Then for all $B\in\hNSp$ there is a short exact sequence of abelian groups $$ 0\map {\ilim}^1\hNSp(\Sigma A_n,B) \map\hNSp(\hocolim A_n,B)\map\ilim\hNSp(A_n,B)\map 0.$$
\end{prop}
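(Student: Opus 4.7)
The proof will proceed in complete duality with the previous proposition, exploiting the fact that the defining triangle of the homotopy colimit involves coproducts, which the contravariant functor $\hNSp(-,B)$ converts to products. The plan is to apply $\hNSp(-,B)$ to the defining exact triangle \eqref{hocolim} of $\hocolim A_n$, extract a four-term exact sequence, and identify the outer terms as a limit and a $\lim^1$-term respectively.

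First, I would apply the cohomological functor $\hNSp(-,B)$ to the triangle
\beqn
\Sigma^{-1}\hocolim A_n\map {\coprod}_{n=0}^\infty A_n\overset{1-\mu}{\map} {\coprod}_{n=0}^\infty A_n\map \hocolim A_n
\eeqn
and use the universal property of coproducts to identify $\hNSp({\coprod}_n A_n,B)\cong\prod_n \hNSp(A_n,B)$ (and likewise after applying $\Sigma$). This yields a long exact sequence of abelian groups whose relevant segment reads
\beqn
\prod_{n=0}^\infty\hNSp(\Sigma A_n,B)\overset{1-\mu}{\map}\prod_{n=0}^\infty\hNSp(\Sigma A_n,B)\map\hNSp(\hocolim A_n,B)\map\prod_{n=0}^\infty\hNSp(A_n,B)\overset{1-\mu}{\map}\prod_{n=0}^\infty\hNSp(A_n,B),
\eeqn
where the map $\mu$ is induced by the structure maps $\alpha_n$ exactly as in the definition of $\hocolim$.

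Second, I would break this long exact sequence into a short exact sequence
\beqn
0\map\C\map \hNSp(\hocolim A_n,B)\map\K\map 0,
\eeqn
where $\K$ is the kernel of $(1-\mu)$ acting on $\prod_n\hNSp(A_n,B)$ and $\C$ is the cokernel of $(1-\mu)$ acting on $\prod_n\hNSp(\Sigma A_n,B)$. Since the $\hNSp(-,B)$-image of the directed system $\{A_n,\alpha_n\}$ is the inverse system $\{\hNSp(A_n,B)\}$ (and likewise with $\Sigma$ inserted), the standard description of $\ilim$ and $\ilim^1$ as the kernel and cokernel of $(1-\mu)$ on the product immediately yields $\K\cong\ilim\hNSp(A_n,B)$ and $\C\cong\ilim^1\hNSp(\Sigma A_n,B)$.

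There is no real obstacle here; the argument is formally dual to the previous proposition and relies only on the defining triangle \eqref{hocolim}, the exactness of the cohomological functor $\hNSp(-,B)$, the conversion of coproducts into products, and the Milnor-style description of $\ilim$ and $\ilim^1$. The only point that requires a moment of care is verifying that the map $\mu$ on the product side of the $\hNSp(-,B)$-image is precisely the one arising from the transition maps of the induced inverse system, but this is immediate from the definition of $\mu$ on the coproduct and the naturality of the isomorphism $\hNSp({\coprod}_n A_n,B)\cong\prod_n\hNSp(A_n,B)$.
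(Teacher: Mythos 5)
Your proposal is correct and follows essentially the same route as the paper: apply the cohomological functor $\hNSp(-,B)$ to the defining triangle \eqref{hocolim}, use that it converts coproducts to products, extract the short exact sequence with kernel and cokernel of $1-\mu$, and identify these with $\ilim$ and $\ilim^1$ of the induced inverse system. No gaps; your write-up is, if anything, slightly more careful about the identification of the transition maps than the paper's own proof.
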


\begin{proof}
 Applying the functor $\hNSp(-,B)$ to the exact triangle \eqref{hocolim} we get a long exact sequence: $$\hNSp(\hocolim A_n,B)\map{\prod}_{n=0}^\infty \hNSp( A_n,B)\overset{1-\mu}{\map} {\prod}_{n=0}^\infty \hNSp(A_n,B)\map \hNSp(\Sigma^{-1}\hocolim A_n,B) .$$ Thus we get a short exact sequence $$0\map\C \map \hNSp(B,\holim A_n) \map \K\map 0, \text{ where}$$ $$\K = \ker (\prod_{n=0}^\infty \hNSp(A_n,B)\overset{1-\mu}{\map}\prod_{n=0}^\infty \hNSp(A_n,B)) \text{ and}$$ $$\C = \coker (\prod_{n=0}^\infty \hNSp(\Sigma A_n,B)\overset{1-\mu}{\map}\prod_{n=0}^\infty \hNSp(\Sigma A_n,B)).$$ Again it is easy to see that $\K=\ilim\hNSp(A_n,B)$ and $\C=\ilim^1 \hNSp(\Sigma A_n,B)$.
\end{proof}

\begin{prop} \label{lim2}
 Let $\{A_n,\alpha_n\}_{n\in\NN}$ be a countable directed system in $\hNSp$. Then for every compact object $B\in\hNSp$ one has an isomorphism $$\dlim\hNSp(B, A_n)\overset{\sim}{\map}\hNSp(B,\hocolim A_n).$$
 
 \noindent
 Dually, if $\{A_n,\alpha_n\}_{n\in\NN}$ is a countable inverse system in $\hNSp$, then for every cocompact object $B\in\hNSp$ one has an isomorphism $$\dlim\hNSp(A_n,B)\overset{\sim}{\map}\hNSp(\holim A_n,B).$$
\end{prop}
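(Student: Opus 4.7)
The plan is to apply the covariant functor $\hNSp(B,-)$ to the defining exact triangle \eqref{hocolim} for $\hocolim A_n$ and read off the claim from the resulting long exact sequence; this mirrors the setup of Proposition \ref{lim1}, with two crucial enhancements furnished by the compactness of $B$. First, compactness ensures that $\hNSp(B,-)$ converts the coproducts appearing in the triangle into direct sums, so that $\hNSp(B, \coprod_n A_n) \cong \bigoplus_n \hNSp(B, A_n)$ (and likewise with $\Sigma A_n$). The relevant segment of the long exact sequence then reads
$$\bigoplus_n \hNSp(B, A_n) \overset{1-\mu_*}{\to} \bigoplus_n \hNSp(B, A_n) \to \hNSp(B, \hocolim A_n) \to \bigoplus_n \hNSp(B, \Sigma A_n) \overset{1-\mu_*}{\to} \bigoplus_n \hNSp(B, \Sigma A_n).$$

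Second, the shift map $1 - \mu_*$ is automatically injective on any direct sum indexed by $\NN$: a finitely supported element $(a_0, a_1, \dots)$ in its kernel must satisfy $a_0 = 0$, and inductively $a_n = \mu_*(a_{n-1}) = 0$. Thus the connecting map $\hNSp(B, \hocolim A_n) \to \bigoplus_n \hNSp(B, \Sigma A_n)$ is forced to vanish, and the sequence collapses to a short exact sequence presenting $\hNSp(B, \hocolim A_n)$ as the cokernel of $1 - \mu_*$ on $\bigoplus_n \hNSp(B, A_n)$. This cokernel is precisely $\dlim \hNSp(B, A_n)$ by the standard presentation of filtered colimits of abelian groups as such cokernels, which yields the claimed isomorphism.

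The dual statement is obtained by running the same argument with every arrow reversed: apply the contravariant functor $\hNSp(-, B)$ to the triangle \eqref{holim} defining $\holim A_n$, and invoke cocompactness of $B$ in the form $\hNSp(\prod_n A_n, B) \cong \bigoplus_n \hNSp(A_n, B)$. The identical direct-sum injectivity of $1 - \mu^*$ then suppresses the would-be $\ilim^1$ term and identifies $\hNSp(\holim A_n, B)$ with $\dlim \hNSp(A_n, B)$. The main point requiring care is verifying that cocompactness, as used here, really produces the $\prod \to \bigoplus$ conversion on the nose for the inverse system at hand and interacts correctly with the $\coprod$-to-$\prod$ swap effected by the contravariant functor; once this identification is installed, the rest of the argument is formally identical to the covariant case, differing from Proposition \ref{lim1} only in that the $\ilim^1$ term vanishes by virtue of the direct-sum structure rather than persisting.
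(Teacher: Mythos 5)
Your argument is correct and agrees in substance with the paper, which simply cites Lemma 1.5 of Neeman's paper on compact objects (and its dual): what you have written out is exactly the standard proof of that lemma, namely that compactness converts the coproducts in the triangle \eqref{hocolim} into direct sums, $1-\mu$ is injective on an $\NN$-indexed direct sum so the would-be ${\ilim}^1$ term vanishes, and the cokernel of $1-\mu$ is the filtered colimit. Your closing worry about the dual case is unnecessary: cocompact means compact in $\hNSp^\op$, so $\hNSp(\prod_n A_n,B)\cong\bigoplus_n\hNSp(A_n,B)$ holds by definition and the rest dualizes verbatim.
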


\begin{proof}
The assertions follow from Lemma 1.5 of \cite{NeeCompObj} (and its dual).
\end{proof}

\begin{ex}
 Let $A$ be any separable $C^*$-algebra. Then $\st(A)=\Sigma^\infty_S(j(A))$ is a compact object in $\iNSp$, which descends to a compact object in $\hNSp$.
\end{ex}

\begin{rem}
In fact, using $\infty$-colimits one can say something sharper. Let $\cI$ be a small filtered $\infty$-category and let $p:\cI\map\iNSp$ be an $\infty$-functor. We denote by $\dlim (p)$ the filtered $\infty$-colimit of this diagram in $\iNSp$. If $B$ is a compact object in $\iNSp$ then, by definition, there is an equivalence of spectra $\dlim_{\cI} \iNSp(B,p(i))\simeq\iNSp(B,\dlim(p))$. A dual of this statement refining the isomorphism $\dlim\hNSp(A_n,B)\overset{\sim}{\map}\hNSp(\holim A_n,B)$ above is also valid.
\end{rem}

Let $\Csep^\delta$ temporarily denote the category of separable $C^*$-algebras (with discrete morphism spaces) and let $\Csep$ denote the topological category of separable $C^*$-algebras. We know that being presentable $\iNS$ admits all small $\infty$-(co)limits whence so does $\iNS^\op$. In a subsequent project we are going to construct a model structure on $\Pro_\omega(\Csep^\delta)$, whose homotopy category will contain $\hosc$ as a full subcategory. Moreover, we are going to relate the homotopy (co)limits in it with the $\infty$-(co)limits in $\Pro_\omega(\iCsep)\simeq\iNS^\op$.

Recall from Definition \ref{NSpectra} that there is a stabilization $\infty$-functor $\Sigma^\infty_S :\iNS\functor\iNSp$. Being a composition $L\circ\Sigma^\infty$ of two left adjoint functors, the $\infty$-functor $\Sigma^\infty_S$ preserves colimits.

\begin{thm} \label{hNSpCont}
  Let $\{C_n,\beta_n: C_{n}\map C_{n+1}\}_{n\in\NN}$ be a countable directed diagram in $\iNS$. Then for every $B\in\hNSp$ one has $$ 0\map {\ilim}^1\hNSp(\Sigma(\Sigma^\infty_S (C_n)),B) \map\hNSp(\Sigma^\infty_S (\dlim C_n),B)\map\ilim\hNSp(\Sigma^\infty_S (C_n),B)\map 0.$$ Dually, for every compact object $B\in\hNSp$ one has $$\dlim\hNSp(B, \Sigma^\infty_S (C_n))\overset{\sim}{\map}\hNSp(B,\Sigma^\infty_S (\dlim C_n)).$$
\end{thm}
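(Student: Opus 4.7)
The plan is to reduce both statements to Propositions \ref{lim1} and \ref{lim2} by first transporting the filtered $\infty$-colimit across $\Sigma^\infty_S$ and then identifying it with the Bökstedt--Neeman homotopy colimit in $\hNSp$ built from the triangle \eqref{hocolim}.

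First, as indicated in the paragraph immediately preceding the theorem, $\Sigma^\infty_S = L \circ \Sigma^\infty$ is a composition of left adjoints between presentable $\infty$-categories and therefore preserves all small $\infty$-colimits. In particular the canonical comparison map
\[
\dlim_n \Sigma^\infty_S (C_n) \overset{\sim}{\map} \Sigma^\infty_S(\dlim_n C_n)
\]
is an equivalence in $\iNSp$, where both colimits are $\infty$-colimits.

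Next, I would identify this $\infty$-colimit with the homotopy colimit of the sequence $A_n := \Sigma^\infty_S(C_n)$ as defined in \eqref{hocolim}. This is standard for sequential diagrams in any stable presentable $\infty$-category: one expresses the filtered $\infty$-colimit as the coequalizer of the identity and the shift map on $\coprod_n A_n$ and then uses stability to rewrite this coequalizer as the cofiber of $1-\mu$, producing precisely the triangle
\[
\Sigma^{-1}\hocolim A_n \map {\coprod}_n A_n \overset{1-\mu}{\map} {\coprod}_n A_n \map \hocolim A_n
\]
in $\hNSp$. Consequently $\Sigma^\infty_S(\dlim_n C_n) \simeq \hocolim_n A_n$ in $\hNSp$.

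Finally, applying Proposition \ref{lim1} to $\{A_n,\alpha_n\}$ gives the Milnor short exact sequence of the first assertion, while the first part of Proposition \ref{lim2}, combined with the hypothesis that $B$ is compact, yields the isomorphism of the second assertion. The only subtle point is the identification in the second step; it is a classical fact going back to Bökstedt--Neeman valid in any stable presentable $\infty$-category with countable coproducts, so no substantial new argument is needed. What one must be careful about is the bookkeeping, namely that the symbol $\dlim$ in the statement is the $\infty$-colimit in $\iNS$ and that, after passage through $\Sigma^\infty_S$, it really matches the homotopy colimit employed in Propositions \ref{lim1}--\ref{lim2}.
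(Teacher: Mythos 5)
Your proposal is correct and follows essentially the same route as the paper: use that $\Sigma^\infty_S$ preserves colimits to move the colimit inside, identify the resulting $\infty$-colimit with the homotopy colimit of \eqref{hocolim} in $\hNSp$, and then invoke Propositions \ref{lim1} and \ref{lim2}. Your explicit justification of the descent step (rewriting the sequential $\infty$-colimit as the cofiber of $1-\mu$) is a detail the paper leaves implicit, but the argument is the same.
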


\begin{proof}
Since $\Sigma^\infty_S$ preserves colimits we have $\Sigma^\infty_S (\dlim C_n)\simeq\dlim \Sigma^\infty_S (C_n)$, which descends to a homotopy colimit in $\hNSp$. The assertions now follow from Propositions \ref{lim1} and \ref{lim2}.
\end{proof}

\begin{lem}
For any nuclear separable $C^*$-algebra $A$, there is a natural homomorphism $\pi_*(A)\map\K_*(A)$ induced by the corner embedding $A\map A\prot\cpt$. This natural homomorphism is an isomorphism if $A$ is stable.
\end{lem}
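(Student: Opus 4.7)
The plan is to factor the desired map through the $\E$-theory localization of Example \ref{bu}(1) and then invoke the classical identification of $\E$-theory with $\K$-theory on nuclear separable $C^*$-algebras.

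First, I would construct the natural map. The corner embedding $\iota_A: A \map A\prot\cpt$ is a $*$-homomorphism, hence a morphism in $\NSH$ depending naturally on $A$. For $n\geqslant 0$, post-composition with $\Sigma^n\iota_A$ yields a natural group homomorphism
$$\iota_\ast: \pi_n(A) = \NSH(\CC,\Sigma^n A) \map \NSH(\CC,\Sigma^n A\prot\cpt);$$
the case $n<0$ is analogous with the suspension placed in the other variable.

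Second, I would identify the target with $\K_n(A)$. Since $\cpt$ is nuclear, $\Sigma=\C_0((0,1),-)$ commutes with $-\prot\cpt$, so $\Sigma^n A\prot\cpt$ is stable. Example \ref{bu}(1) then gives a canonical isomorphism $\NSH(\CC,\Sigma^n A\prot\cpt)\cong \E_0(\CC,\Sigma^n A\prot\cpt)$. The suspension isomorphism in $\E$-theory collapses the right-hand side to $\E_n(\CC,A\prot\cpt)$; $C^*$-stability of $\E$-theory (built in by construction) gives $\E_n(\CC,A\prot\cpt)\cong \E_n(\CC,A)$; and the classical Connes--Higson identification $\E_\ast\cong\KK_\ast$ on nuclear algebras, combined with $\KK_n(\CC,A)\cong \K_n(A)$, produces $\K_n(A)$. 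This chain of natural isomorphisms defines the natural map $\pi_n(A)\map \K_n(A)$.

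Third, suppose that $A$ is itself stable. Then $\Sigma^n A$ is stable as well, so Example \ref{bu}(1) also yields $\NSH(\CC,\Sigma^n A)\cong \E_0(\CC,\Sigma^n A)$. The localization functor $\NSH\map \E$ sends $\iota_A$ to an isomorphism by the very definition of $\E$-theory, so $\iota_\ast$ becomes an isomorphism after this localization; since both source and target of $\iota_\ast$ are already $\E$-local by the above, $\iota_\ast$ itself is an isomorphism, and the composite $\pi_n(A)\cong \K_n(A)$ follows. The bulk of the work is hidden in two black boxes, namely Example \ref{bu}(1) and the Connes--Higson--D\u{a}d\u{a}rlat identification of $\E$ with $\KK$ on nuclear algebras; the only mild subtleties are the indexing conventions and the negative-$n$ case, both of which are routine.
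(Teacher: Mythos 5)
Your proposal is correct and follows essentially the same route as the paper: induce the map from the corner embedding, identify $\pi_*(A\prot\cpt)$ with $\E_*(A\prot\cpt)\cong\K_*(A)$ using the $\E$-theoretic description of $\NSH(-,B)$ for stable $B$ together with nuclearity and the $C^*$-stability of $\K$-theory. The only difference is that you spell out the stable case (which the paper dismisses as ``clear'') via the observation that both source and target are already $\E$-local, which is a correct and welcome elaboration.
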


\begin{proof}
Consider the natural homomorphism $\pi_*(A)\map\pi_*(A\prot\cpt)$ induced by the corner embedding. Since the noncommutative stable cohomotopy groups coincide with the $\E$-theory groups for stable $C^*$-algebras, one may identify $\pi_*(A\prot\cpt)\cong\E_*(A\prot\cpt)$. Moreover, the nuclearity of $A$ implies that one can naturally identify $\E_*(A\prot\cpt)\cong\K_*(A\prot\cpt)\cong\K_*(A)$. Here the last identification is due to the $C^*$-stability of topological $\K$-theory, which is natural. The second assertion is clear.
\end{proof}

\begin{ex}
 In Example 11 of \cite{MyTwist} it is explained how one can construct an inverse system of separable $C^*$-algebras $\{C_n\}_{n\in\NN}=\{(\textup{CT}(SU(n)),\iota_n^*(P))\}_{n\in\NN}$ starting from a principal $PU$-bundle $P$ on $SU(\infty)$. The inverse limit of this diagram in topological $*$-algebras is the {\em noncommutative twisted version of $SU(\infty)$}. Using the above Lemma and Theorem \ref{hNSpCont} we deduce that the noncommutative stable cohomotopy groups of $\dlim C_n$ in $\iNS$ vanish.
\end{ex}

\subsection{Finite group $C^*$-algebras}
For a finite group (or, more generally, a compact group) $G$ and a $G$-$C^*$-algebra $A$, the {\em Green--Julg--Rosenberg Theorem} establishes a natural isomorphism $\K^G_*(A)\cong\K_*(A\rtimes G)$. In particular, setting $A=\CC$ we find that the $\K$-theory of the group $C^*$-algebra $C^*(G)$ is isomorphic to the $G$-equivariant $\K$-theory of a point. One might wonder whether the pattern persists in noncommutative stable cohomotopy. This would show that noncommutative stable cohomotopy of a finite group $C^*$-algebra is isomorphic to equivariant stable cohomotopy of the $0$-sphere, which in turn can be computed using the Segal conjecture. Unfortunately, the answer turns out be negative as we presently demonstrate.

\begin{ex} \label{GJR}
It is known that the $G$-equivariant $0$-th stable (co)homotopy group of a point is isomorphic to the Burnside ring of $G$. The underlying abelian group of the Burnside ring is generated by the finite set $\{G/H\, |\, H\subseteq G \text{ subgroup}\}$. Noncommutative stable cohomotopy is finitely additive, i.e., one has $\pi_*(\prod_{i=1}^n A_n)\cong\oplus_{i=1}^n\pi_*(A_n)$. 

Let $G=\ZZ/p$, where $p$ is an odd prime. Then $C^*(G)$ is a commutative finite dimensional $C^*$-algebra, whence it decomposes as $C^*(G)\cong \prod_{i=1}^p \CC$. Using the finite additivity of noncommutative stable cohomotopy and the fact that $\pi_0(\CC)\cong\pi^0(S^0)\simeq\ZZ$, we see that $\pi_0(C^*(G))\simeq \ZZ^p$. Now one immediately observes that the rank of $\pi_0(C^*(G))$ differs from the rank of the Burnside ring of $G$, which is $2$ since $G=\ZZ/p$ is a simple group.
\end{ex}

\noindent
In general, one can reduce the problem to the computation of the noncommutative stable cohomotopy groups of matrix algebras using Maschke and Artin--Wedderburn Theorems. 

\begin{lem} \label{FinComp}
Let $G$ be a finite group, so that $C^*(G)\cong\prod_{i=1}^k M_{n_i}(\CC)$. Then one has an isomorphism $\pi_m(C^*(G))\cong\oplus_{i=1}^k \pi_m(M_{n_i}(\CC))$ as abelian groups.
\end{lem}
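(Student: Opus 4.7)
The approach is to reduce everything to the finite additivity of noncommutative stable cohomotopy, which was already invoked in Example \ref{GJR}. The decomposition $C^*(G) \cong \prod_{i=1}^k M_{n_i}(\CC)$ is given by hypothesis (coming from Maschke's theorem and Artin--Wedderburn), so the only real content is to verify that $\pi_m$ turns a finite $C^*$-algebraic product into a direct sum of abelian groups.

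First I would observe that for any pair $A, B \in \Csep$, the short exact sequence $0 \to A \to A \times B \to B \to 0$ splits by the canonical projections. By the triangulated homology theory structure on $\iota:\Csep\functor\NSH$ discussed in Subsection \ref{Ostvaer}, this yields a split distinguished triangle in $\NSH$, so $A \times B$ is the biproduct $A \oplus B$ in the additive category $\NSH$. Applying the functor $\NSH(\CC, \Sigma^m(-))$ then gives $\pi_m(A \times B) \cong \pi_m(A) \oplus \pi_m(B)$, since hom-groups out of a fixed object convert finite biproducts into finite direct sums. Iterating over the $k$ factors of the Artin--Wedderburn decomposition yields
\[
\pi_m(C^*(G)) \;\cong\; \pi_m\Bigl(\prod_{i=1}^k M_{n_i}(\CC)\Bigr) \;\cong\; \bigoplus_{i=1}^k \pi_m(M_{n_i}(\CC)),
\]
which is the claim.

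There is no genuine obstacle; the only point that deserves verification is that the $C^*$-algebraic direct product really becomes a biproduct in $\NSH$ (equivalently, in $\hNSp^\op$ via the embedding of Theorem \ref{ExInc}). This could alternatively be read off from the exactness of $\pi:\h\iCsep\functor\hNSp^\op$ together with the fact that the embedding $j:\iCsep^\op\functor\iNS$ preserves finite colimits, so the cofiber sequence in $\iCsep^\op$ associated with the split extension descends to a split cofiber sequence in $\iNSp$.
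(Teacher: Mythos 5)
Your proposal is correct and takes essentially the same route as the paper: the paper's proof is the single sentence that the claim ``follows immediately from the finite additivity of $\pi_m(-)$'' (as stated in Example \ref{GJR}), and your argument merely supplies the standard verification of that additivity via the split extension $0\to A\to A\times B\to B\to 0$ and the resulting split triangle in $\NSH$. Nothing further is needed.
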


\begin{proof}
The assertion follows immediately from the finite additivity of $\pi_m(-)$.
\end{proof}

\noindent
Observe that if $\Csep(\C(X,x),M_n(\CC))$ is an ANR with the point-norm topology, then the canonical map $$c:[\C(X,x),\C(Y,y)\otimes M_n(\CC)]\map[[\C(X,x),\C(Y,y)\otimes M_n(\CC)]]$$ is an isomorphism (see Proposition 16 of \cite{DadAsymHom}).

\begin{lem} \label{ANR}
The space $\Csep(\Sigma^r\CC,M_n(\CC))$ is an ANR for all $r,n\in\NN$.
\end{lem}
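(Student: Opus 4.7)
The strategy is to realize $\Csep(\Sigma^r\CC,M_n(\CC))$ with its point-norm topology as a compact real algebraic subset of a finite-dimensional real vector space and then appeal to the triangulability of semi-algebraic sets. Identify $\Sigma^r\CC\cong C_0(\RR^r)$ and choose the ``stereographic generators''
\[ g_i(x):=\frac{x_i}{1+|x|^2}\ \ (1\leqslant i\leqslant r),\qquad g_{r+1}(x):=\frac{1}{1+|x|^2}, \]
which lie in $C_0(\RR^r)$ and satisfy the polynomial identity $\sum_{i=1}^r g_i^2+g_{r+1}^2=g_{r+1}$. By Stone--Weierstrass the $*$-subalgebra they generate is dense in $C_0(\RR^r)$, so any $*$-homomorphism out of $\Sigma^r\CC$ is determined by its restriction to $\{g_j\}_{j=1}^{r+1}$.

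Consider then the continuous (point-norm to norm) injection
\[ \iota\colon \Csep(\Sigma^r\CC,M_n(\CC))\to (M_n(\CC)_{\mathrm{sa}})^{r+1},\qquad \phi\mapsto(\phi(g_1),\ldots,\phi(g_{r+1})). \]
The claim is that its image coincides with the real algebraic set
\[ X:=\{(T_1,\ldots,T_{r+1})\in(M_n(\CC)_{\mathrm{sa}})^{r+1}\,:\,[T_i,T_j]=0,\ \textstyle\sum_{i=1}^r T_i^2+T_{r+1}^2=T_{r+1}\}. \]
The inclusion $\iota(\Csep(\Sigma^r\CC,M_n(\CC)))\subseteq X$ is immediate from the defining identity on the $g_j$'s. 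For the reverse inclusion, simultaneously diagonalise any tuple in $X$: each joint eigenvalue tuple $(t_1,\ldots,t_{r+1})$ satisfies $\sum t_i^2+(t_{r+1}-\tfrac12)^2=\tfrac14$, so lies on the sphere of radius $\tfrac12$ centred at $(0,\ldots,0,\tfrac12)$ in $\RR^{r+1}$. The restriction of $(g_1,\ldots,g_{r+1})$ to $\RR^r$ is a homeomorphism onto this sphere minus the origin (the origin corresponding to the point at infinity, which absorbs the kernel of $\phi$); functional calculus for commuting self-adjoints then produces a unique $*$-homomorphism $\phi$ with $\phi(g_j)=T_j$.

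The set $X$ is closed and bounded in the finite-dimensional real vector space $(M_n(\CC)_{\mathrm{sa}})^{r+1}$, hence compact. Continuity of $\iota^{-1}$ follows from the fact that point-norm convergence on the source can be detected on the generating set $\{g_j\}$, using the density of $*$-polynomials in the $g_j$'s and the norm contractivity of $*$-homomorphisms via a standard three-$\epsilon$ argument. Thus $\iota$ is a homeomorphism onto the compact real algebraic set $X$, and Lojasiewicz's triangulation theorem exhibits $X$ as a finite polyhedron, in particular an ANR. The principal technical point is the surjectivity of $\iota$ onto $X$: one has to verify carefully via the spectral classification of $*$-homomorphisms from commutative $C^*$-algebras into $M_n(\CC)$ (finite projection-valued measures supported on $\RR^r$) that every commuting tuple of self-adjoints satisfying the sphere relation arises from a $*$-homomorphism.
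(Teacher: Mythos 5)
Your proof is correct and follows essentially the same route as the paper: both realize $\Csep(\Sigma^r\CC,M_n(\CC))$ as a compact real algebraic subset of a finite-dimensional real vector space, cut out by the commutation and sphere relations on finitely many self-adjoint generators, and then invoke the fact that such a set is an ANR. The only (cosmetic) difference is that the paper first passes to the unital picture, identifying the space with $\mathtt{SC^*_1}(\C(S^r),M_n(\CC))$ and using the universal presentation of $\C(S^r)$, whereas you work directly with $C_0(\RR^r)$ via stereographic generators.
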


\begin{proof}
There is a homeomorphism of spaces $\Csep(\Sigma^r\CC,M_n(\CC))\cong\mathtt{SC^*_1}(\C(S^r),{M}_n(\CC))$ (see 1.2.4. of \cite{DadNem}), where $\mathtt{SC^*_1}(\C(S^r),{M}_n(\CC))$ denotes the space of unital $*$-homomorphisms between unital $C^*$-algebras. Now $\C(S^r)$ is the universal $C^*$-algebra on a finite set of generators and relations, i.e., $\C(S^r)\cong C^*\{x_1,\cdots,x_r\;|\: x_i =x^*_i, x_ix_j=x_jx_i, \sum_{i=1}^r x_i^2 =1\}$ (see, for instance, Theorem 1.1. of \cite{BanGos}). It follows that $\mathtt{SC^*_1}(\C(S^r),M_n(\CC))$ is a compact finite dimensional manifold (in fact, a real algebraic variety with standard topology). It is well-known that such a space is an ANR (see, for instance, Theorem 26.17.4 of \cite{GreHar}).
\end{proof}

\begin{prop}
 The noncommutative stable cohomotopy groups of $M_n(\CC)$ are
 \beqn
\pi_k(M_n(\CC))\cong\begin{cases}{\dlim}_r [\Sigma^r\CC,\Sigma^{r+k} M_n(\CC)] & \text{if $k\geqslant 0$,}\\
                {\dlim}_r [\Sigma^{r+k}\CC,\Sigma^r M_n(\CC)] & \text{if $k<0$.}
               \end{cases}
\eeqn
\end{prop}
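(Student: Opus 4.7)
The strategy is to unravel the definition of $\pi_k(M_n(\CC))$ using the machinery developed earlier, and then to reduce to the ANR recognition principle of D\u{a}d\u{a}rlat recalled just before Lemma \ref{ANR}.

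First, I would use Theorem \ref{ExInc} to identify $\pi_k(M_n(\CC))$ with a hom-group in $\NSH$: for $k\ge 0$ this reads $\NSH(\CC,\Sigma^k M_n(\CC))$, while for $k<0$, since the convention is to place the suspension on the other variable, it reads $\NSH(\Sigma^{|k|}\CC, M_n(\CC))$. Substituting the Spanier--Whitehead-type formula \eqref{PiDef} rewrites each of these as a filtered colimit of asymptotic homotopy classes $[[-,-]]$ indexed by the suspension degree $r$.

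Next, I would pass from asymptotic to ordinary homotopy classes term-wise. Writing the target as $\Sigma^{r+k}M_n(\CC)\cong\Sigma^{r+k}\CC\otimes M_n(\CC)$, the canonical comparison map $c:[\Sigma^r\CC,\Sigma^{r+k}\CC\otimes M_n(\CC)]\functor[[\Sigma^r\CC,\Sigma^{r+k}\CC\otimes M_n(\CC)]]$ is a bijection by Proposition 16 of \cite{DadAsymHom}, as soon as $\Csep(\Sigma^r\CC,M_n(\CC))$ is an ANR with the point-norm topology. This ANR property is precisely the content of Lemma \ref{ANR}, and the parallel argument (with $\Sigma^r\CC$ replaced by $\Sigma^{r+|k|}\CC$) handles the case $k<0$. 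Since filtered colimits preserve bijections, $\dlim_r c$ yields the claimed formula up to a reindexing of $r$.

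The main subtlety is precisely the invocation of the ANR condition: this step is what forces the restriction to matrix algebras, since the compact real algebraic variety argument underlying Lemma \ref{ANR} does not extend to general separable $C^*$-algebras. Without it one cannot term-wise exchange $[[-,-]]$ for $[-,-]$ in the colimit. This obstruction is of the same flavour as the one flagged in Remark \ref{Dad} in the context of commutative semiprojectivity.
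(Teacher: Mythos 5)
Your proposal is correct and follows essentially the same route as the paper: identify $\pi_k(M_n(\CC))$ with the relevant $\NSH$-group, expand via Equation \eqref{PiDef} as a colimit of asymptotic homotopy classes, and use Lemma \ref{ANR} together with Proposition 16 of \cite{DadAsymHom} to replace $[[-,-]]$ by $[-,-]$ term-wise before passing to the colimit. The only point the paper makes explicit that you leave implicit is the commutativity of the comparison squares over the suspension maps, which is what legitimizes taking $\dlim_r c$.
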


\begin{proof}
Since the following diagram commutes for all $r\in\NN$
\beqn
\xymatrix{
[\Sigma^r\CC,\Sigma^{r+k} M_n(\CC)]\ar[r]\ar[d]^c & [\Sigma^{r+1}\CC,\Sigma^{r+1+k} M_n(\CC)]\ar[d]^c\\
[[\Sigma^r\CC,\Sigma^{r+k} M_n(\CC)]]\ar[r] & [[\Sigma^{r+1}\CC,\Sigma^{r+1+k} M_n(\CC)]]
}
\eeqn and the vertical arrows are isomorphisms due to the above Lemma \ref{ANR}, the assertion follows from Equation \eqref{PiDef} by passing to the inductive limit.
\end{proof}

\begin{rem}
There is a spectrum $\{X_n\}$ with $X_n = \mathtt{SC^*_1}(\C(S^r),M_n(\CC))$, whose homotopy groups are the noncommutative stable cohomotopy groups of $M_n(\CC)$. The proof of Lemma \ref{ANR} gives an explicit description of the spaces $X_n$ as real algebraic varieties and it is plausible that the homotopy groups of these spaces can be computed using algebro-geometric methods.
\end{rem}

\begin{ex}
Let us consider the case of $2\times 2$-matrices. There is a canonical embedding $\iota:\CC^2\map M_2(\CC)$, where $\CC^2$ is viewed as the diagonal $C^*$-algebra. The mapping cone $\C(\iota)$ can be identified with $q\CC$ (see, for instance, Section 3.3 of \cite{Loring}). Therefore, the noncommutative stable cohomotopy groups of $M_2(\CC)$ can be computed from those of $\CC^2$ and $q\CC$ via the induced long exact sequence. There is also a short exact sequence of $C^*$-algebras $0\map q\CC\map \CC\ast\CC\map\CC\map 0$ by the very defintion of $q\CC$ \cite{CunKK}. It follows that the noncommutative stable cohomotopy groups of $q\CC$ can be read off from the induced long exact sequence in terms of those of $\CC\ast\CC$ and $\CC$. The computation is further facilitated by the fact that the extension $0\map q\CC\map\CC\ast\CC\map\CC\map 0$ is actually split (via two canonical splittings). Note that $\CC\cong \C(S^0)$, where $S^0$ is the pointed $0$-sphere, and $\CC\ast\CC$ is the product of two pointed $0$-spheres in the category of pointed noncommutative spaces $\iNS$.
 \end{ex}
 
 \section{Appendix} 
 
\emph{This appendix has been added after the publication of the article. It addresses a question that the author has received from several different sources.}
  
Let $\cC$ be a small pointed $\infty$-category with finite colimits and set $\cD = \Ind_\omega(\cC)$, i.e., the ind-completion of $\cC$ that demonstrates $\cD$ to be a presentable $\infty$-category. Let $\Sigma=\Sigma_\cD$ denote the suspension functor in $\cD$ and let $\Omega=\Omega_\cD$ denote its right adjoint. Recall that there is also an adjoint pair $\adj{\Sigma^\infty=\Sigma^\infty_\cD}{\cD}{\Sp(\cD)}{\Omega^\infty_\cD=\Omega^\infty}$.

\begin{lem}
 There is a natural identification $$\h\Sp(\cD)(\Sigma^\infty X,\Sigma^\infty Y) \cong \dlim_n \h\cC(\Sigma^n X,\Sigma^n Y)$$ for all $X,Y\in\cC$.
\end{lem}

\begin{proof}
 Due to the existence of finite colimits in $\cC$ and the preservation of finite colimits by the Yoneda embedding $j:\cC\map \cD$ (see Proposition 5.3.5.14 of \cite{LurToposBook}) we conclude that if $X\in\cC$, then so does $\Sigma^n X$ for all $n\in\NN$. Now 
 \beqn
\h\Sp(\cD)(\Sigma^\infty X,\Sigma^\infty Y) &\cong& \h\cD( X, \Omega^\infty\Sigma^\infty Y) \text{ [adjunction of $\Sigma^\infty,\Omega^\infty$]}\\
   &\cong& \h\cD(X,{\dlim}_n \Omega^n\Sigma^n Y) \\
   &\cong& {\dlim}_n \h\cD(X,\Omega^n\Sigma^n Y) \text{ [since $X$ is a compact object in $\cD$]}\\
   &\cong& {\dlim}_n \h\cD(\Sigma^n X,\Sigma^n Y) \text{ [adjunction of $\Sigma^n,\Omega^n$]}\\
   &\cong& {\dlim}_n \h\cC(\Sigma^n X,\Sigma^n Y) \text{ [since $j:\cC\map\cD$ is fully faithful]}.
 \eeqn
\end{proof}

\begin{ex}
The above argument is implicitly used for $\cC = \iCsep^\op$ in Remark \ref{Susp}.
\end{ex}

Another way to see the same is as follows: Let $\mathtt{{Cat}^{fincolim}_{\infty,\ast}}$ denote the $\infty$-category of small, pointed and finitely cocomplete $\infty$-categories with finite colimit preserving functors between them. Let $\cC[\Sigma^{-1}]$ be the direct limit in $\mathtt{{Cat}^{fincolim}_{\infty,\ast}}$ of the following diagram: $$\cC\overset{\Sigma}{\map}\cC\overset{\Sigma}{\map}\cC\overset{\Sigma}{\map} \cdots$$ Then one can show that $\Ind_\omega(\cC[\Sigma^{-1}])\simeq\Sp(\cD)$ using the fact that they both satisfy the same set of infinity categorical universal properties. It follows that there is a fully faithful functor $\h\cC[\Sigma^{-1}]\hookrightarrow\h\Ind_\omega(\cC[\Sigma^{-1}])\simeq\h\Sp(\cD)$. Now $\h\cC[\Sigma^{-1}]$ is equivalent to the Spanier--Whitehead category of $\h\cC$ with respect to the endofunctor $\Sigma$ and the morphisms in the Spanier--Whitehead category are precisely $\dlim_n \h\cC(\Sigma^n X,\Sigma^n Y)$ for all $X,Y\in\cC$ (for further details consult the appendix of \cite{BarJoaMe}).


\bibliographystyle{abbrv}

\bibliography{/home/ibatu/Professional/math/MasterBib/bibliography}

\begin{thebibliography}{10}

\bibitem{AndGro}
K.~K.~S. Andersen and J.~Grodal.
\newblock A {B}aues fibration category structure on {B}anach and
  {$C^*$}-algebras.
\newblock {\em http://www.math.ku.dk/~jg/papers/fibcat.pdf}, 1997.

\bibitem{BalSpectra}
P.~Balmer.
\newblock Spectra, spectra, spectra---tensor triangular spectra versus
  {Z}ariski spectra of endomorphism rings.
\newblock {\em Algebr. Geom. Topol.}, 10(3):1521--1563, 2010.

\bibitem{BanGos}
T.~Banica and D.~Goswami.
\newblock Quantum isometries and noncommutative spheres.
\newblock {\em Comm. Math. Phys.}, 298(2):343--356, 2010.

\bibitem{BarJoaMe}
I.~Barnea, M.~Joachim, and S.~Mahanta.
\newblock Model structure on projective systems of {$C^*$}-algebras and
  bivariant homology theories.
\newblock {\em New York J. Math.}, 23:383--439, 2017.

\bibitem{Bentmann}
R.~Bentmann.
\newblock Homotopy-theoretic {$E$}-theory and $n$-order.
\newblock {\em J. Homotopy Relat. Struct.}, 9(2):455--463, 2014.

\bibitem{Bergner}
J.~E. Bergner.
\newblock A survey of {$(\infty,1)$}-categories.
\newblock In {\em Towards higher categories}, volume 152 of {\em IMA Vol. Math.
  Appl.}, pages 69--83. Springer, New York, 2010.

\bibitem{BluGepTab}
A.~J. Blumberg, D.~Gepner, and G.~Tabuada.
\newblock A universal characterization of higher algebraic {K}-theory.
\newblock {\em Geom. Topol.}, 17(2):733--838, 2013.

\bibitem{BoaVog}
J.~M. Boardman and R.~M. Vogt.
\newblock {\em Homotopy invariant algebraic structures on topological spaces}.
\newblock Lecture Notes in Mathematics, Vol. 347. Springer-Verlag, Berlin,
  1973.

\bibitem{BokNee}
M.~B{\"o}kstedt and A.~Neeman.
\newblock Homotopy limits in triangulated categories.
\newblock {\em Compositio Math.}, 86(2):209--234, 1993.

\bibitem{BouKan}
A.~K. Bousfield and D.~M. Kan.
\newblock {\em Homotopy limits, completions and localizations}.
\newblock Lecture Notes in Mathematics, Vol. 304. Springer-Verlag, Berlin,
  1972.

\bibitem{ChrStr}
J.~D. Christensen and N.~P. Strickland.
\newblock Phantom maps and homology theories.
\newblock {\em Topology}, 37(2):339--364, 1998.

\bibitem{ConBook}
A.~Connes.
\newblock {\em Noncommutative geometry}.
\newblock Academic Press Inc., San Diego, CA, 1994.

\bibitem{ConHig}
A.~Connes and N.~Higson.
\newblock D\'eformations, morphismes asymptotiques et {$K$}-th\'eorie
  bivariante.
\newblock {\em C. R. Acad. Sci. Paris S\'er. I Math.}, 311(2):101--106, 1990.

\bibitem{CunKK}
J.~Cuntz.
\newblock A new look at {$KK$}-theory.
\newblock {\em $K$-Theory}, 1(1):31--51, 1987.

\bibitem{CunMeyRos}
J.~Cuntz, R.~Meyer, and J.~M. Rosenberg.
\newblock {\em Topological and bivariant {$K$}-theory}, volume~36 of {\em
  Oberwolfach Seminars}.
\newblock Birkh\"auser Verlag, Basel, 2007.

\bibitem{CunSka}
J.~Cuntz and G.~Skandalis.
\newblock Mapping cones and exact sequences in {$KK$}-theory.
\newblock {\em J. Operator Theory}, 15(1):163--180, 1986.

\bibitem{CunTho}
J.~Cuntz and A.~Thom.
\newblock Algebraic {$K$}-theory and locally convex algebras.
\newblock {\em Math. Ann.}, 334(2):339--371, 2006.

\bibitem{DadAsymHom}
M.~D{\u{a}}d{\u{a}}rlat.
\newblock A note on asymptotic homomorphisms.
\newblock {\em $K$-Theory}, 8(5):465--482, 1994.

\bibitem{DadNem}
M.~D{\u{a}}d{\u{a}}rlat and A.~N{\'e}methi.
\newblock Shape theory and (connective) {$K$}-theory.
\newblock {\em J. Operator Theory}, 23(2):207--291, 1990.

\bibitem{Ambrogio}
I.~Dell'Ambrogio.
\newblock Prime tensor ideals in some triangulated categories of
  {$C^*$}-algebras.
\newblock {\em Thesis, ETH Z{\"u}rich}, 2008.

\bibitem{DellTab}
I.~Dell'Ambrogio and G.~Tabuada.
\newblock Morita homotopy theory of {$C^*$}-categories.
\newblock {\em J. Algebra}, 398:162--199, 2014.

\bibitem{DwyKan}
W.~G. Dwyer and D.~M. Kan.
\newblock Simplicial localizations of categories.
\newblock {\em J. Pure Appl. Algebra}, 17(3):267--284, 1980.

\bibitem{GepHau}
D.~Gepner and R.~Haugseng.
\newblock Enriched $\infty$-categories via non-symmetric $\infty$-operads.
\newblock {\em arXiv:1312.3178}.

\bibitem{GreHar}
M.~J. Greenberg and J.~R. Harper.
\newblock {\em Algebraic topology}, volume~58 of {\em Mathematics Lecture Note
  Series}.
\newblock Benjamin/Cummings Publishing Co. Inc. Advanced Book Program, Reading,
  Mass., 1981.
\newblock A first course.

\bibitem{HigETh}
N.~Higson.
\newblock Categories of fractions and excision in {$KK$}-theory.
\newblock {\em J. Pure Appl. Algebra}, 65(2):119--138, 1990.

\bibitem{HouTho}
T.~G. Houghton-Larsen and K.~Thomsen.
\newblock Universal (co)homology theories.
\newblock {\em $K$-Theory}, 16(1):1--27, 1999.

\bibitem{JoaJoh}
M.~Joachim and M.~W. Johnson.
\newblock Realizing {K}asparov's {$KK$}-theory groups as the homotopy classes
  of maps of a {Q}uillen model category.
\newblock In {\em An alpine anthology of homotopy theory}, volume 399 of {\em
  Contemp. Math.}, pages 163--197. Amer. Math. Soc., Providence, RI, 2006.

\bibitem{Joyal2}
A.~Joyal.
\newblock Notes on {Q}uasi-categories.
\newblock {\em http://www.math.uchicago.edu/~may/IMA/Joyal.pdf}.

\bibitem{Joyal}
A.~Joyal.
\newblock Quasi-categories and {K}an complexes.
\newblock {\em J. Pure Appl. Algebra}, 175(1-3):207--222, 2002.
\newblock Special volume celebrating the 70th birthday of Professor Max Kelly.

\bibitem{KasKK2}
G.~G. Kasparov.
\newblock Topological invariants of elliptic operators. {I}. {$K$}-homology.
\newblock {\em Izv. Akad. Nauk SSSR Ser. Mat.}, 39(4):796--838, 1975.

\bibitem{KasKK1}
G.~G. Kasparov.
\newblock The operator {$K$}-functor and extensions of {$C\sp{\ast}
  $}-algebras.
\newblock {\em Izv. Akad. Nauk SSSR Ser. Mat.}, 44(3):571--636, 719, 1980.

\bibitem{KelDG}
B.~Keller.
\newblock On differential graded categories.
\newblock In {\em International Congress of Mathematicians. Vol. II}, pages
  151--190. Eur. Math. Soc., Z\"urich, 2006.

\bibitem{KraBR}
H.~Krause.
\newblock A {B}rown representability theorem via coherent functors.
\newblock {\em Topology}, 41(4):853--861, 2002.

\bibitem{KraLoc}
H.~Krause.
\newblock Localization theory for triangulated categories.
\newblock In {\em Triangulated categories}, volume 375 of {\em London Math.
  Soc. Lecture Note Ser.}, pages 161--235. Cambridge Univ. Press, Cambridge,
  2010.

\bibitem{Levine}
M.~Levine.
\newblock A comparison of motivic and classical stable homotopy theories.
\newblock {\em J. Topol.}, 7(2):327--362, 2014.

\bibitem{Loring}
T.~A. Loring.
\newblock {\em Lifting solutions to perturbing problems in {$C^*$}-algebras},
  volume~8 of {\em Fields Institute Monographs}.
\newblock American Mathematical Society, Providence, RI, 1997.

\bibitem{LurHigAlg}
J.~Lurie.
\newblock Higher {A}lgebra.
\newblock {\em http://www.math.harvard.edu/$\sim$lurie/}.

\bibitem{LurToposBook}
J.~Lurie.
\newblock {\em Higher topos theory}, volume 170 of {\em Annals of Mathematics
  Studies}.
\newblock Princeton University Press, Princeton, NJ, 2009.

\bibitem{MyTwist}
S.~Mahanta.
\newblock Twisted {$K$}-theory, {$K$}-homology, and bivariant {C}hern-{C}onnes
  type character of some infinite dimensional spaces.
\newblock {\em Kyoto J. Math.}, 54(3):597--640, 2014.

\bibitem{MyNGH}
S.~Mahanta.
\newblock On the {G}enerating {H}ypothesis in noncommutative stable homotopy.
\newblock {\em Math. Scand.}, 116(2):301--308, 2015.

\bibitem{MyNSHLoc}
S.~Mahanta.
\newblock Symmetric monoidal noncommutative spectra, strongly self-absorbing
  {$C^*$}-algebras, and bivariant homology.
\newblock {\em J. Noncommut. Geom.}, 10(4):1269--1301, 2016.

\bibitem{MarSeg}
S.~Marde{\v{s}}i{\'c} and J.~Segal.
\newblock Shapes of compacta and {ANR}-systems.
\newblock {\em Fund. Math.}, 72(1):41--59, 1971.

\bibitem{Mey1}
R.~Meyer.
\newblock Categorical aspects of bivariant {$K$}-theory.
\newblock In {\em {$K$}-theory and noncommutative geometry}, EMS Ser. Congr.
  Rep., pages 1--39. Eur. Math. Soc., Z\"urich, 2008.

\bibitem{NeeCompObj}
A.~Neeman.
\newblock The connection between the {$K$}-theory localization theorem of
  {T}homason, {T}robaugh and {Y}ao and the smashing subcategories of
  {B}ousfield and {R}avenel.
\newblock {\em Ann. Sci. \'Ecole Norm. Sup. (4)}, 25(5):547--566, 1992.

\bibitem{NeeBook}
A.~Neeman.
\newblock {\em Triangulated categories}, volume 148 of {\em Annals of
  Mathematics Studies}.
\newblock Princeton University Press, Princeton, NJ, 2001.

\bibitem{Nowak}
S.~Nowak.
\newblock Stable cohomotopy groups of compact spaces.
\newblock {\em Fund. Math.}, 180(2):99--137, 2003.

\bibitem{Ost}
P.~A. {\O}stv{\ae}r.
\newblock {\em Homotopy theory of {$C^\ast$}-algebras}.
\newblock Frontiers in Mathematics. Birkh\"auser/Springer Basel AG, Basel,
  2010.

\bibitem{PhiHtpy}
N.~C. Phillips.
\newblock {$K$}-theory and noncommutative homotopy theory.
\newblock In {\em Operator theory: operator algebras and applications, {P}art 2
  ({D}urham, {NH}, 1988)}, volume~51 of {\em Proc. Sympos. Pure Math.}, pages
  255--265. Amer. Math. Soc., Providence, RI, 1990.

\bibitem{RosNCT}
J.~Rosenberg.
\newblock The role of {$K$}-theory in noncommutative algebraic topology.
\newblock In {\em Operator algebras and {$K$}-theory ({S}an {F}rancisco,
  {C}alif., 1981)}, volume~10 of {\em Contemp. Math.}, pages 155--182. Amer.
  Math. Soc., Providence, R.I., 1982.

\bibitem{SchTopMet2}
C.~Schochet.
\newblock Topological methods for {$C^{\ast} $}-algebras. {III}. {A}xiomatic
  homology.
\newblock {\em Pacific J. Math.}, 114(2):399--445, 1984.

\bibitem{SchwedeTri}
S.~Schwede.
\newblock Algebraic versus topological triangulated categories.
\newblock In {\em Triangulated categories}, volume 375 of {\em London Math.
  Soc. Lecture Note Ser.}, pages 389--407. Cambridge Univ. Press, Cambridge,
  2010.

\bibitem{SchAlgTri}
S.~Schwede.
\newblock The $n$-order of algebraic triangulated categories.
\newblock {\em J. Topol.}, 6(4):857--867, 2013.

\bibitem{SchTopTri}
S.~Schwede.
\newblock The $p$-order of topological triangulated categories.
\newblock {\em J. Topol.}, 6(4):868--914, 2013.

\bibitem{SorThi}
A.~P.~W. S{\o}rensen and H.~Thiel.
\newblock A characterization of semiprojectivity for commutative
  {$C^\ast$}-algebras.
\newblock {\em Proc. Lond. Math. Soc. (3)}, 105(5):1021--1046, 2012.

\bibitem{ThomThesis}
A.~Thom.
\newblock Connective {$E$}-theory and bivariant homology for {$C^*$}-algebras.
\newblock {\em PhD {T}hesis, {M}{\"u}nster}, 2003.

\bibitem{Uuye}
O.~Uuye.
\newblock Homotopical {A}lgebra for {$C^*$}-algebras.
\newblock {\em J. Noncommut. Geom.}, 7(4):981--1006, 2013.

\end{thebibliography}

\end{document}